\documentclass{amsart}
\usepackage{amssymb}
\usepackage{amsfonts}
\usepackage{amssymb}
\usepackage{amsmath}
\usepackage{amsthm}
\usepackage{pdfpages}
\usepackage{enumerate}
\usepackage{tabularx}
\usepackage{centernot}
\usepackage{mathtools}
\usepackage{stmaryrd}
\usepackage{amsthm,amssymb}
\usepackage{etoolbox,color}
\usepackage{tikz}
\usepackage{amssymb}
\usetikzlibrary{matrix}
\usepackage{tikz-cd}
\usepackage{tikz}
\usepackage{marginnote}
\definecolor{mygray}{gray}{0.85}
\usepackage[linecolor=black, backgroundcolor=mygray,colorinlistoftodos,prependcaption,textsize=small]{todonotes}
\usepackage{xargs}  
\usepackage[colorlinks,citecolor=blue,urlcolor=blue, linkcolor=blue]{hyperref}

\renewcommand{\leq}{\leqslant}
\renewcommand{\geq}{\geqslant}

\renewcommand{\trianglelefteq}{\trianglelefteqslant}
\newcommand{\mrm}[1]{\mathrm{#1}}

\usepackage[backgroundcolor=mygray,colorinlistoftodos,prependcaption,textsize=small]{todonotes}
\usepackage{xcolor}

\makeatletter
\def\subsection{\@startsection{subsection}{3}%
  \z@{.5\linespacing\@plus.7\linespacing}{.3\linespacing}%
  {\bfseries\centering}}
\makeatother

\makeatletter
\def\subsubsection{\@startsection{subsubsection}{3}%
  \z@{.5\linespacing\@plus.7\linespacing}{.3\linespacing}%
  {\centering}}
\makeatother

\makeatletter
\def\myfnt{\ifx\protect\@typeset@protect\expandafter\footnote\else\expandafter\@gobble\fi}
\makeatother

\newtheorem{theorem}{Theorem}[section]

\newtheorem{convention}[theorem]{Convention}

\newtheorem{lemma}[theorem]{Lemma}
\newtheorem{proposition}[theorem]{Proposition}

\newtheorem{proviso}[theorem]{Proviso}

\theoremstyle{plain}

\theoremstyle{definition}

\newtheorem{cclaim}[theorem]{Claim}
\newtheorem{fact}[theorem]{Fact}
\newtheorem{mclaim}[theorem]{Main Claim}
\newtheorem{definition}[theorem]{Definition}
\newtheorem{remark}[theorem]{Remark}

\newtheorem{observation}[theorem]{Observation}
\newtheorem{notation}[theorem]{Notation}

\newcounter{claimcounter}

\begin{document}

\begin{abstract} 
An uncountable $\aleph_1$-free group cannot admit a Polish group topology but an uncountable $\aleph_1$-free abelian group can, as witnessed, for example, by the Baer-Specker group $\mathbb{Z}^\omega$; more strongly, $\mathbb{Z}^\omega$ is separable. In this paper we investigate $\aleph_1$-free abelian non-Archimedean Polish groups. We prove two main results. The first is that there are continuum many separable (and so torsionless, and so $\aleph_1$-free) abelian non-Archimedean Polish groups which are pairwise not topologically isomorphic. The second is that the following four properties are complete co-analytic subsets of the space of closed abelian subgroups of $S_\infty$: separability, torsionlessness, $\aleph_1$-freeness and $\mathbb{Z}$-homogeneity.
\end{abstract}

\title{$\aleph_1$-free abelian non-Archimedean Polish groups}


\thanks{No. 1254 on Shelah's publication list. Research of the first author was  supported by project PRIN 2022 ``Models, sets and classifications", prot. 2022TECZJA, and by INdAM Project 2024 (Consolidator grant) ``Groups, Crystals and Classifications''. Research of the second author was partially supported by Israel Science Foundation (ISF) grants no: 1838/19 and 2320/23. The first author would like to thank A. Nies for useful discussions related to this paper.}

\author{Gianluca Paolini}
\address{Department of Mathematics ``Giuseppe Peano'', University of Torino, Via Carlo Alberto 10, 10123, Italy.}
\email{gianluca.paolini@unito.it}

\author{Saharon Shelah}
\address{Einstein Institute of Mathematics,  The Hebrew University of Jerusalem, Israel \and Department of Mathematics,  Rutgers University, U.S.A.}
\email{shelah@math.huji.ac.il}

\date{\today}
\maketitle




\section{Introduction}

Recall that a topological group $G$ is said to be Polish if its group topology is Polish, i.e., separable and completely metrizable, and it is said to be non-Archimedean Polish if, in addition, it admits a countable neighborhood basis at the identity consisting of open subgroups. In a meeting in Durham in 1997, D. Evans asked if a non-Archimedean Polish group can be an uncountable free group. Around the same time H. Becker and A. Kechris asked the same question without the assumption of non-Archimedeanity (cf. \cite{kechris}). In \cite{shelah} the second author settled the first question, and in the subsequent paper \cite{shelah_1} he settled also the second question. It was later discovered that some of the impossibility results from \cite{shelah_1} followed already from an old important result of Dudley \cite{dudley}. All these results were later vastly generalized in \cite{paolini&shelah} by both authors of the present paper in the context of graph products of groups. In another direction, Khelif proved in \cite{khelif} that no uncountable $\aleph_1$-free groups can be Polish, where we recall that a group $G$ is said to be $\aleph_1$-free (or $\aleph_1$-free abelian) if every countable subgroup of $G$ is free (or free abelian, respectively). Furthermore, an uncountable Polish group cannot be free abelian either, by \cite{shelah_1}. In contrast, as well-known, the Baer-Specker group $\mathbb{Z}^\omega$ when topologized with the product topology is non-Archimedean Polish and $\aleph_1$-free abelian (but not free abelian). This interesting state of affairs motivated us to investigate the structure of the class of $\aleph_1$-free abelian non-Archimedean Polish groups, wondering \mbox{in particular: how  complicated is the class of these topological groups?}

\medskip
\noindent

	Recently, there has been significant interest in abelian (non-Archimedean) Polish groups. On one hand, with respect to the study of the Borel complexity of their actions, see e.g. \cite{gao_tame}, and on the other hand, in the context of homological algebra, see e.g. \cite{lupini1, lupini2}. Our study, which is algebraic in nature, complements these existing lines of inquiry by providing new structural algebraic insights into these groups.
	
	
	
%

	\medskip
	
	We recall the following definitions as they will play a crucial role in our study:
	
	\begin{definition}\label{def_intro} Let $A$ be a torsion-free abelian group. 
	\begin{enumerate}[(1)]
	\item We say that $A$ is {\em separable} if every finite subset of $A$ is contained in a free direct summand of $A$.
	\item We say that $A$ is {\em torsionless} if for every $0 \neq a \in A$ there is $f \in \mrm{Hom}(A, \mathbb{Z})$ such that $f(a) \neq 0$.
	\item We say that $A$ is {\em $\mathbb{Z}$-homogenenous} if every element has type $\mathbf{0}$ (cf. \ref{def_type}).
	\end{enumerate}
\end{definition}

It is well-known, see e.g. \cite[Corollary~2.9]{mekler}, that we have the following:

$$\text{separable } \Rightarrow \text{ torsionless } \Rightarrow \text{ $\aleph_1$-free } \Rightarrow \text{$\mathbb{Z}$-homogenenous}.$$

Now, it so happens that $\mathbb{Z}^\omega$ is not only $\aleph_1$-free but actually separable. Thus, the motivating question of the paper mentioned above (on the existence of $\aleph_1$-free abelian non-Archimedean Polish groups) can be strengthened to torsionless, or even separable, abelian groups. We will deal with all these properties in our paper.

\medskip

The first result concerns the number of non-isomorphic groups realizing the four properties above, and it is of the strongest possible form. We recall that a completely decomposable abelian group is a group that is a direct sum of groups of rank~$1$.

\begin{theorem}\label{second_th} There are continuum many separable (hence torsionless, hence {$\aleph_1$-free}) abelian non-Archimedean Polish groups which are pairwise not topologically isomorphic. Furthermore, all these groups can be taken to be inverse limits of countable torsion-free completely decomposable groups.
\end{theorem}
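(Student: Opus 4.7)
The plan is to parametrize the family by an almost disjoint family $\{S_\alpha : \alpha < 2^{\aleph_0}\}$ of infinite subsets of $\omega$, and for each $\alpha$ to build a group $G_\alpha$ as an inverse limit of finite-rank completely decomposable groups whose topological isomorphism type encodes $S_\alpha$ in a robust way. Fix an enumeration $(p_n)_{n < \omega}$ of the primes, and at each level $n$ take $G_{\alpha,n} = \bigoplus_{i \leq n} A_{\alpha,n,i}$ with each $A_{\alpha,n,i}$ a rank-one subgroup of $\mathbb{Q}$. The bonding maps $G_{\alpha,n+1} \twoheadrightarrow G_{\alpha,n}$ are surjective and chosen so that a divisibility ``twist'' involving $p_n$ is introduced on one designated coordinate precisely when $n \in S_\alpha$. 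The inverse limit $G_\alpha := \varprojlim_n G_{\alpha,n}$ with the inverse limit topology is then a non-Archimedean Polish abelian group, expressible as a closed subgroup of a Baer--Specker-type product.

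Separability of each $G_\alpha$ is proved by transferring the argument that works for $\mathbb{Z}^\omega$: given a finite $F \subseteq G_\alpha$, one picks $N$ large enough that the twists past level $N$ do not interact with $F$, and observes that the $N$-truncation of $F$ lies in a free direct summand of $G_{\alpha,N}$ which lifts coherently through the tower to a free direct summand of $G_\alpha$. The property that $G_\alpha$ is not topologically isomorphic to a product group follows from an obstruction argument: any topological product decomposition into countable discrete factors would force the full infinite family of divisibility twists to be accounted for locally, and the infinitude of $S_\alpha$ blocks this.

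The main work, and the main obstacle, is to show that for $\alpha \neq \beta$ there is no continuous surjective homomorphism $f : G_\alpha \twoheadrightarrow G_\beta$. The strategy is to invoke the open mapping theorem for Polish groups, so that $f$ is open and the neighborhood basis $\{\ker \pi_{\alpha,n}\}_n$ of the identity in $G_\alpha$ maps to a neighborhood basis of the identity in $G_\beta$ of the same form. From this one argues that $f$ factors, up to a finite-rank correction, through a map $G_{\alpha,N} \to G_\beta$ for some $N$, so the divisibility pattern induced by $S_\beta$ in $G_\beta$ would have to be already visible from the first $N$ levels of the inverse system defining $G_\alpha$. The almost disjointness of $S_\alpha$ and $S_\beta$, together with the rank-one structure at each coordinate, then contradicts this. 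The delicate point, where most of the effort will go, is verifying that the divisibility invariant detecting $S_\alpha$ really is preserved by arbitrary continuous surjections, and not merely by those that respect the presentations as inverse systems.
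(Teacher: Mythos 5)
Your overall architecture matches the paper's: an almost disjoint family of infinite sets of primes, inverse limits of completely decomposable groups with surjective bonding maps, factoring any continuous surjection $G_\alpha \to G_\beta$ through a finite level of the tower, and then a divisibility obstruction combined with almost disjointness. For the factoring step, note that you do not need the open mapping theorem: the composite of the surjection with the projection onto the discrete group $G_{\beta,0}$ is a continuous map to a discrete group, so its kernel is open and contains some basic subgroup $\ker \pi_{\alpha,N}$, which already gives the factorization (this is Lemma~\ref{preserving_iso_progroups}). The "delicate point" you flag at the end is handled in the paper by the fact that homomorphisms of torsion-free abelian groups do not decrease types ($\mathbf{t}(a) \leq \mathbf{t}(f(a))$, Fact~\ref{Fuchs_fact}), applied at the finite level where types are controlled; so that part of your plan is sound.

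The genuine gap is in the construction itself and in the separability argument, which is the heart of the theorem. As you describe it, a twist by $p_n$ is introduced "on one designated coordinate" whenever $n \in S_\alpha$; since $S_\alpha$ is infinite, if these twists accumulate along a single thread of the inverse system, the limit contains an element divisible by infinitely many distinct primes, i.e.\ of non-zero type, and the group is then not $\mathbb{Z}$-homogeneous, hence not $\aleph_1$-free, hence not separable (this is exactly Claim~\ref{inf_branch_claim}). Your separability sketch --- "pick $N$ so that the twists past level $N$ do not interact with $F$" --- assumes away precisely this accumulation problem rather than solving it. Worse, the paper's final subsection exhibits a system in which every thread meets only finitely many twists and yet the limit still fails to be $\mathbb{Z}$-homogeneous, because divisibility reappears diagonally across infinitely many coordinates; so the truncation intuition is not merely unproved but false in general. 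This is why the paper arranges the twisted nodes along a \emph{well-founded} subtree $S$ of $\omega^{<\omega}$ (the nice pairs of Section~\ref{sec_nice_pairs}) and proves separability by a rank-stabilization induction that eventually lands every finite subset in a summand indexed by nodes with finite label set (Main Claim~\ref{the_hammer}). Your non-product argument has the same defect: "the twists must be accounted for locally" is not an argument; the paper again factors through a finite level and uses that the finite levels contain elements of non-zero type while an $\aleph_1$-free product group is a product of countable free abelian groups, all of whose elements have type $\mathbf{0}$.
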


	The second result that we present is formulated in the framework of invariant descriptive set theory (see \cite{gao} for an excellent introduction to this subject). In~\cite{nies}, Kechris, Nies and Tent proposed the following program: as any non-Archimedean Polish group is topologically isomorphic to a closed subgroup of the topological group $S_\infty$ of bijections of $\mathbb{N}$ onto $\mathbb{N}$, we can study classification problems on non-Archimedean Polish groups with respect to the Effros structure on the set of closed subgroups of $S_\infty$, which is a standard Borel space (see \cite{nies} for details). 
	
	\smallskip
	At the heart this program there is the following two-fold task:
	\begin{enumerate}[(1)]
	\item[$\bullet$] for natural classes of closed subgroups of $S_\infty$, determine
whether they are Borel;
	\item[$\bullet$] if a class $\mathcal{C}$ is Borel, study the Borel complexity of
topological isomorphism on~$\mathcal{C}$.
	\end{enumerate}
	
	In this respect, in our second theorem, we prove that the four properties mentioned above are not Borel and, more strongly, that they are complete co-analytic.
	
	\begin{theorem}\label{being_almost_free_coanalytic} Determining if a non-Archimedean Polish group is in $\mathcal{C}$ is a complete co-analytic problem in the space of closed subgroups of $S_\infty$ for the following classes $\mathcal{C}$ of abelian groups:
	\begin{enumerate}[(1)]
	\item $\mathbb{Z}$-homogeneous;
	\item $\aleph_1$-free;
	\item torsionless;
	\item separable.
	\end{enumerate}
\end{theorem}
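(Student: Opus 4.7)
The plan has two parts: verifying that each of the four classes is $\Pi^1_1$, and reducing the canonical $\Pi^1_1$-complete problem $\mathrm{WF}$ (well-foundedness of trees $T \subseteq \omega^{<\omega}$) to each of them. Thanks to the chain separable $\Rightarrow$ torsionless $\Rightarrow \aleph_1$-free $\Rightarrow \mathbb{Z}$-homogeneous, it suffices for the second part to construct, Borel-in-$T$, a closed abelian subgroup $G_T \leq S_\infty$ such that $T$ well-founded implies $G_T$ is separable while $T$ ill-founded implies $G_T$ is not $\mathbb{Z}$-homogeneous: this single reduction then handles all four properties simultaneously.

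For the upper bound I would use that a closed abelian subgroup $G \leq S_\infty$ is non-Archimedean Polish and hence an inverse limit of countable discrete quotients; in particular every homomorphism $G \to \mathbb{Z}$ factors through an open subgroup of $G$, so the dual $\mrm{Hom}(G, \mathbb{Z})$ is countable and Borel-parametrised from a code of $G$. Combined with Kuratowski--Ryll-Nardzewski selection to access elements of $G$ Borel-ly, torsionlessness and separability become $\forall$-over-$G$ of a countable $\exists$ over $\mrm{Hom}(G, \mathbb{Z})$, hence $\Pi^1_1$; $\mathbb{Z}$-homogeneity is directly $\forall a \in G$ of a Borel condition on the $p$-heights of $a$; and $\aleph_1$-freeness is translated via Pontryagin's criterion into the $\forall$-finite-tuple $\Pi^1_1$ statement that every finitely generated pure subgroup of $G$ is free.

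For the reduction I would first Borel-reduce $\mathrm{WF}$ to subtrees of $2^{<\omega}$, which avoids spurious divisibility from infinite branching while keeping the problem $\Pi^1_1$-complete. Attach a distinct prime $p_s$ to each non-root $s \in T$, and form the countable torsion-free abelian group $A_T$ generated by $\{x_s : s \in T\} \cup \{y_s : s \in T, s \neq \emptyset\}$ with defining relations $p_s x_s = x_{s^-} + y_s$. Let $G_T$ be the completion of $A_T$ in the topology given by the filtration $F_n = \langle y_s : |s| > n\rangle$: then $G_T$ is naturally a non-Archimedean Polish abelian group, realised as an inverse limit of the finite-rank torsion-free completely decomposable quotients $A_T/F_n$, in the spirit of the furthermore clause of Theorem~\ref{second_th}. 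Along any infinite branch $\beta = (s_0 \prec s_1 \prec \cdots)$ of $T$, the series $\sum_{k \geq 1} p_{s_1} \cdots p_{s_{k-1}} y_{s_k}$ converges in $G_T$ to some $Y_\beta$, and $x_{s_0} + Y_\beta$ becomes divisible in $G_T$ by $p_{s_1} \cdots p_{s_n}$ for every $n$; its type therefore has positive height at infinitely many primes, so $G_T$ is not $\mathbb{Z}$-homogeneous.

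The main obstacle I anticipate is the forward direction of the reduction: showing that well-foundedness of $T$ delivers the strongest property, separability, not merely $\aleph_1$-freeness. Concretely, for any finite tuple in $G_T$ one has to produce an explicit free direct summand of $G_T$ containing it. I would prove this by induction on the well-founded rank of $T$: in the absence of infinite branches the quotients $A_T/F_n$ decompose cleanly into rank-$1$ free pieces, and the transition maps $A_T/F_{n+1} \to A_T/F_n$ can be arranged so that these decompositions are coherent across $n$ and therefore lift to free direct summands of the inverse limit $G_T$. Finally, embedding the resulting non-Archimedean Polish abelian group as a closed subgroup of $S_\infty$ in a Borel-in-$T$ way is routine, and completes the reduction.
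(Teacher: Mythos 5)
Your overall architecture---a single Borel reduction of $\mathrm{WF}$ sending well-founded trees to separable groups and ill-founded trees to non-$\mathbb{Z}$-homogeneous ones, with the implication chain covering all four classes at once---is exactly the paper's, and your branch element $x_{s_0}+Y_\beta$ plays the same role as the paper's $x_\eta$ for $\eta$ an infinite branch of $S$. But the first step of your reduction is fatally broken: you cannot ``Borel-reduce $\mathrm{WF}$ to subtrees of $2^{<\omega}$ while keeping the problem $\Pi^1_1$-complete,'' because by K\"onig's lemma a finitely branching tree is well-founded if and only if it is finite, so well-foundedness of subtrees of $2^{<\omega}$ is a $\Sigma^0_2$ (in particular Borel) condition, and no $\Pi^1_1$-complete set Borel-reduces to a Borel set. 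You must keep trees on $\omega$. The paper handles the infinite branching you were worried about not by pruning the tree but by confining the primes to a shifted copy $S=A^{+1}$ of the input tree inside $T=\omega^{<\omega}$ (Definition~\ref{the_group_G_T}), so that infinite divisibility arises only along infinite branches of $S$; your relations $p_sx_s=x_{s^-}+y_s$ with pairwise distinct primes would plausibly also survive $\omega$-branching, but that has to be verified rather than avoided.

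The second substantial gap is the implication ``$T$ well-founded $\Rightarrow$ $G_T$ separable,'' which is the technical heart of the paper (Main Claim~\ref{the_hammer}). Given a finite $X\subseteq G_T$ one must find a level $n$ and a finite set $Y$ of coordinates with finite divisibility such that projecting $\langle X\rangle^*_{G_T}$ onto $\langle Y\rangle^*_{G_n}$ is injective of full rank; the crux (the paper's $(*_{16})$) is that the rank cannot drop when the infinitely divisible coordinates are discarded, and this is exactly where well-foundedness enters, via an induction that would otherwise build an infinite branch---not an induction on the ordinal rank of $T$, and not a statement about ``coherent decompositions,'' which as written asserts the conclusion rather than proving it. Finally, your $\Pi^1_1$ upper bound for torsionlessness and separability rests on the claim that $\mrm{Hom}(G,\mathbb{Z})$ is countable for every abelian non-Archimedean Polish $G$; this is false (a countable discrete free abelian group of infinite rank is such a $G$, and its dual is $\mathbb{Z}^\omega$), so the membership-in-$\Pi^1_1$ half needs a different argument, e.g.\ one that quantifies only over homomorphisms determined by countably much data.
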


	We conclude the paper with a result of independent interest:

\begin{theorem}\label{th_ind_interest} There exists an inverse limit of free abelian groups (and so an abelian non-Archimedean Polish group) which is $\mathbb{Z}$-homogenenous but not $\aleph_1$-free.
\end{theorem}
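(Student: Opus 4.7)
I plan to construct the desired group as an explicit inverse limit $G=\varprojlim_{n}F_n$ with each $F_n$ a free abelian group, arranging matters so that $G$ contains a countable non-free subgroup (witnessing failure of $\aleph_1$-freeness) while exploiting a general principle which makes $\mathbb{Z}$-homogeneity essentially automatic for such limits.

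The preliminary observation is that for any inverse limit $G=\varprojlim_{n}F_n$ of free abelian groups, $G$ is $\mathbb{Z}$-homogeneous. Indeed, given a nonzero $g=(g_n)_n\in G$, pick $m$ with $g_m\ne 0$. Since $F_m$ is free abelian, $g_m$ has finitely many nonzero coordinates, so the set $S$ of primes dividing every coordinate of $g_m$ is finite, and the $p$-adic valuation of $\gcd(g_m)$ is finite for each $p\in S$. An element is $p$-divisible in $G$ only if each coordinate $g_n$ lies in $pF_n$; in particular $p$ must lie in $S$, and the $p$-height of $g$ in $G$ is bounded by $v_p(\gcd(g_m))$. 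Hence the characteristic of $g$ has only finitely many nonzero entries, each finite, so $g$ has type $\mathbf{0}$.

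The main task is then to arrange that $G$ contains a countable subgroup that is not free. The target witness is the rank-$2$ subgroup $H\le\mathbb{Q}^2$ generated by $e_1,e_2$ together with, for each prime $p$, an element $v_p=(a_pe_1+b_pe_2)/p$, where the projective classes $(a_p:b_p)\in\mathbb{P}^1(\mathbb{F}_p)$ are chosen by a diagonal argument over the countable set $\mathbb{P}^1(\mathbb{Q})$ of rational slopes so that for every fixed $(\alpha:\beta)\in\mathbb{P}^1(\mathbb{Q})$ the congruence $(a_p:b_p)\equiv(\alpha:\beta)\pmod p$ holds only for finitely many primes $p$. A direct height computation then shows that every element of $H$ has type $\mathbf{0}$, while $H$ has rank $2$ and is not finitely generated (because the $v_p$ supply unbounded prime denominators), hence is not free. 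To realize $H$ inside the inverse limit, I would take $F_n$ to be the finitely generated free abelian group with basis corresponding to $e_1,e_2$ and $v_{p_0},\ldots,v_{p_n}$, and design bonding maps $\pi_n\colon F_{n+1}\to F_n$ so that the defining relations $p_kv_{p_k}=a_{p_k}e_1+b_{p_k}e_2$ are not imposed at any finite stage but appear only as compatibility conditions in the limit; then $H$ embeds into $G$ as the diagonal subgroup of compatible sequences realising the chosen generators, and $G$ is not $\aleph_1$-free.

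The main obstacle will be the design of the bonding maps $\pi_n$: each $F_n$ should be honestly a free abelian group in which the relations of $H$ are not yet visible, but the compatible sequences $(h_n)_n$ corresponding to $v_p$ must combine across stages so that $p_kv_{p_k}=a_{p_k}e_1+b_{p_k}e_2$ holds in $G$. The slope condition on the $(a_p:b_p)$ is the key combinatorial input: it is what allows the non-free $H$ to fit inside $G$ through genuinely limit-level divisibilities, rather than being forced to live in any finite stage where, being a subgroup of $F_n\cong\mathbb{Z}^{r_n}$, it would have to be free. Once the inverse system is in place, $\mathbb{Z}$-homogeneity of $G$ follows automatically from the observation above, while non-$\aleph_1$-freeness follows from the embedding $H\hookrightarrow G$.
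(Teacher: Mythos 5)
The first half of your proposal is correct and, as far as it goes, cleaner than what is strictly needed: for any nonzero $g=(g_n)_{n}$ in a subgroup of a product $\prod_n F_n$ of torsion-free groups, the characteristic of $g$ is pointwise bounded by the characteristic of any nonzero coordinate $g_m$ computed in $F_m$; when $F_m$ is free abelian that characteristic has finite entries and is almost everywhere zero, so $\mathbf{t}(g)=\mathbf{0}$. Since an inverse limit is a subgroup of the product, $\mathbb{Z}$-homogeneity is indeed automatic for your $G$.

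The second half has a fatal gap, located exactly where you flag ``the main obstacle''. In an inverse limit of abelian groups no relation can ``appear only as a compatibility condition in the limit'': $\varprojlim_n F_n$ is by definition a subgroup of $\prod_n F_n$, and a $\mathbb{Z}$-linear relation among compatible sequences holds in the limit if and only if it holds in every coordinate, so the relations $p_k v_{p_k}=a_{p_k}e_1+b_{p_k}e_2$ would have to hold already in each $F_n$. Worse, the very projection argument that gives you $\mathbb{Z}$-homogeneity for free also destroys the plan outright: if every $F_n$ is a countable free abelian group, then $\prod_n F_n$ embeds into $\prod_n\mathbb{Z}^{\omega}\cong\mathbb{Z}^{\omega}$, and the Baer--Specker group is $\aleph_1$-free (indeed separable), so \emph{every} countable subgroup of $\varprojlim_n F_n$ is free; your rank-$2$ non-free group $H$ can therefore never embed, no matter how the bonding maps are designed. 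The two halves of your proposal are in direct tension: coordinatewise projection makes both $\mathbb{Z}$-homogeneity and $\aleph_1$-freeness automatic. The only way to salvage a statement of this kind is to relax ``free'' at the finite stages to ``completely decomposable'' (direct sums of rank-one groups in which infinitely many primes may be inverted), which is the property actually emphasized in the discussion surrounding the construction — and note that the literal wording of Theorem~\ref{th_ind_interest}, with ``free'', is subject to the same objection. The intended witness for non-$\aleph_1$-freeness is the same kind of group as your $H$ (the rank-$\mathbf{n}$ Pontryagin-type groups $K_\mathbf{n}$ of Fact~\ref{the_groups_for_counterex}), but it must be fed into an inverse system whose terms are completely decomposable rather than free; if you repair your argument along these lines you will also have to reprove $\mathbb{Z}$-homogeneity of the limit by hand (for instance by a case distinction on whether a given element eventually leaves the distinguished summands), since your automatic argument no longer applies verbatim.
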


The structure of the paper is as follows. In Section~2 we collect the necessary 
preliminaries. In Section~3 we introduce the main group-theoretic construction 
at the heart of the paper. In Section~4 we prove Theorems~\ref{second_th} 
and~\ref{being_almost_free_coanalytic}. Finally, in Section~5 we establish some 
complementary results.

\section{The descriptive set theory setting}


	From here on, as is customary in set theory, we denote the set of natural numbers by $\omega$. The aim of this section is to fix the coding, i.e., to specify which spaces we refer to in our results, together with introducing the necessary background.

\begin{fact}[{\cite[Proposition~2.1]{gao_tame}}]\label{char_abelian}\label{char_polish} Let $G$ be a topological group, then the following are equivalent:
	\begin{enumerate}[(1)]
	\item $G$ is a non-Archimedean Polish abelian group;
	\item $G$ is isomorphic (as a topological group) to a closed subgroup of $\prod_{n < \omega} H_n$ (with the product topology), with each $H_n$ abelian, countable and discrete;
	\item $G$ is the inverse limit of an inverse system $(G_n, f_{(n, m)} : m \leq n < \omega)$, with each $G_n$ abelian, countable, discrete, \mbox{where on $G$ we consider the product topology.}
\end{enumerate}
\end{fact}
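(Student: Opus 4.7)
The plan is to prove the three-way equivalence by establishing the cycle $(3) \Rightarrow (2) \Rightarrow (1) \Rightarrow (3)$. The first two implications are essentially bookkeeping, while the last one is the substantive direction and will rely on the completeness of $G$.

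For $(3) \Rightarrow (2)$, the inverse limit $\varprojlim_n G_n$ of a countable inverse system $(G_n, f_{n,m})$ of countable discrete abelian groups is, by definition, the set of coherent sequences
\[
\{(x_n)_n \in \textstyle\prod_n G_n : f_{n,m}(x_n) = x_m \text{ for all } m \leq n\},
\]
which is closed in $\prod_n G_n$ (each coherence condition cuts out a closed subgroup), and the product topology on the limit coincides with the subspace topology. For $(2) \Rightarrow (1)$, the product $\prod_n H_n$ of countable discrete abelian groups is abelian, Polish (countable product of Polish spaces), and non-Archimedean (the sets $\{0\}^n \times \prod_{k \geq n} H_k$ are open subgroups forming a countable neighborhood basis at the identity); all these properties pass to closed subgroups.

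For the main implication $(1) \Rightarrow (3)$, let $G$ be a non-Archimedean Polish abelian group, and fix a countable neighborhood basis $(U_n)_{n < \omega}$ at the identity consisting of open subgroups. Passing to finite intersections, I may assume $U_{n+1} \leq U_n$. Since $G$ is abelian, each $U_n$ is normal, so $G_n := G/U_n$ is an abelian group; since $U_n$ is open, $G_n$ is discrete; and since $G$ is separable (so second countable), the cosets of $U_n$ form a countable partition, hence $G_n$ is countable. The natural projections $f_{n,m}: G_n \to G_m$ for $m \leq n$ yield an inverse system, together with a continuous homomorphism $\varphi: G \to \varprojlim_n G_n$.

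The map $\varphi$ is injective because $\bigcap_n U_n = \{e\}$ (Hausdorffness combined with the fact that $(U_n)$ is a neighborhood basis at $e$), and it is open onto its image since preimages under $\varphi$ of the basic opens of $\varprojlim_n G_n$ are exactly the cosets $gU_n$, which form a basis of $G$. The substantive step, and the main obstacle, is surjectivity: given a coherent sequence $(x_n U_n)_n$ with representatives $x_n \in G$, the relation $x_n U_m = x_m U_m$ for $m \leq n$ forces $(x_n)_n$ to be Cauchy for the two-sided uniformity of $G$, and by complete metrizability of $G$ the sequence converges to some $x^* \in G$ satisfying $\varphi(x^*) = (x_n U_n)_n$. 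Care is needed here because completeness of $G$ is stated with respect to some compatible complete metric, not directly with respect to the filtration by the $U_n$'s; I would either invoke the standard fact that every Polish group admits a compatible complete two-sided uniformity matching its topology, or construct a complete left-invariant ultrametric from the non-Archimedean filtration itself (e.g.\ $d(g,h) = 2^{-n}$ where $n$ is maximal with $gh^{-1} \in U_n$) and verify it is compatible and complete.
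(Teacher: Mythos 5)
The paper offers no proof of this statement---it is quoted as a Fact from \cite[Proposition~2.1]{gao_tame}---so there is no in-paper argument to compare against; your proof is the standard one and is correct, including the reduction of $(3)\Rightarrow(2)\Rightarrow(1)$ to bookkeeping and the identification of $(1)\Rightarrow(3)$ as the substantive step. The one point genuinely deserving the care you flag is completeness in $(1)\Rightarrow(3)$: a compatible left-invariant (ultra)metric on a Polish group need not be complete in general (e.g.\ the standard left-invariant metric on $S_\infty$ is not), but because $G$ is abelian the left and two-sided uniformities coincide, and every Polish group is complete in its two-sided uniformity, so the coherent sequence indeed converges---which is exactly the standard fact you propose to invoke, and the coset $x_mU_m$ being closed then gives $\varphi(x^*)=(x_nU_n)_n$ as you claim.
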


	\begin{notation}\label{space_TFAB_polish} Let $X$ be a Polish space. The Effros structure on $X$ is the Borel space consisting of the family $\mathcal{F}(X)$ of closed subsets of $X$ together with the $\sigma$-algebra generated by the following sets $\mathcal{C}_U$, where, for $U \subseteq X$ open, we let:
	$$\mathcal{C}_U = \{D \in \mathcal{F}(X) : D \cap U \neq \emptyset\}.$$
\end{notation}

	\begin{notation}\label{S_inf_topology} We denote by $S_\infty$ the topological group of bijections of $\omega$ onto $\omega$, where the topology is the one induced by the following metric:
	\begin{enumerate}[(1)]
	\item if $x = y$, then $d(x, y) = 0$;
	\item if $x \neq y$, then $d(x, y) = 2^{-n}$, where $n$ is least such that $x(n) \neq y(n)$.
	\end{enumerate}
\end{notation}

\begin{fact}\label{inv_is_Borel_pre} The closed subgroups of $S_\infty$ form a Borel subset of $\mathcal{F}(S_\infty)$ (see \cite{nies}), which we denote 
by $\mrm{Sgp}(S_\infty)$,
together with the Borel \mbox{structure inherited from $\mathcal{F}(S_\infty)$.}
\end{fact}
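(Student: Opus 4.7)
The plan is to work directly in $\mathcal{F}(S_\infty)$ and reduce the subgroup axioms to countably many Borel conditions using a Borel dense-selector. By the Kuratowski--Ryll-Nardzewski selection theorem applied to the Effros Borel space, there exist Borel maps $d_n \colon \mathcal{F}(S_\infty) \setminus \{\emptyset\} \to S_\infty$ such that $\{d_n(C) : n < \omega\}$ is dense in $C$ for every nonempty $C$. Since $\{\emptyset\} \in \mathcal{F}(S_\infty)$ is Borel and does not belong to $\mrm{Sgp}(S_\infty)$, it suffices to decide the subgroup property among nonempty $C$.

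The next step is to observe that, because multiplication and inversion in $S_\infty$ are continuous, a nonempty closed $C \subseteq S_\infty$ is a subgroup if and only if: (i) $e \in C$; (ii) $d_n(C)^{-1} \in C$ for every $n$; and (iii) $d_n(C) \cdot d_m(C) \in C$ for every $n, m$. The nontrivial direction is that these conditions already suffice: given $g, h \in C$, pick subsequences with $d_{n_k}(C) \to g$ and $d_{m_k}(C) \to h$; then $d_{n_k}(C) \cdot d_{m_k}(C)^{-1}$ lies in $C$ and converges to $g h^{-1}$, which in turn lies in $C$ by closedness.

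To finish, I would check that each of (i)--(iii) defines a Borel subset of $\mathcal{F}(S_\infty)$. Condition (i) equals $\bigcap_k \mathcal{C}_{B(e, 2^{-k})}$, hence is $G_\delta$. For (ii) and (iii) the key ingredient is the standard fact that the membership relation $E = \{(g, C) \in S_\infty \times \mathcal{F}(S_\infty) : g \in C\}$ is Borel: fixing a countable basis $(V_j)_{j < \omega}$ of $S_\infty$, one has $(g, C) \in E$ iff for every $j$, either $g \notin V_j$ or $C \in \mathcal{C}_{V_j}$, a countable intersection of Borel sets (using that $C$ is closed, so $g \in C$ iff every basic neighbourhood of $g$ meets $C$). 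Composing $E$ with the Borel maps $C \mapsto d_n(C)^{-1}$ and $C \mapsto d_n(C) d_m(C)$, which are Borel because the group operations of $S_\infty$ are continuous and the selectors are Borel, renders (ii) and (iii) Borel.

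The main obstacle I anticipate is precisely this Borelness-of-membership lemma, which requires a careful interplay between the generators $\mathcal{C}_U$ of the Effros $\sigma$-algebra and the closedness of $C$; granting it, the conjunction of (i)--(iii) over the countably many $n, m$ exhibits $\mrm{Sgp}(S_\infty)$ as a Borel subset of $\mathcal{F}(S_\infty)$.
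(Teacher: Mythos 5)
The paper does not actually prove this statement: it is recorded as a Fact with a pointer to \cite{nies}, so there is no internal argument to compare against. Your proposal supplies a genuine, essentially standard proof via the Effros Borel space: Kuratowski--Ryll-Nardzewski selectors $d_n$, Borelness of the membership relation $E=\{(g,C):g\in C\}$, and the reduction of the subgroup axioms to countably many Borel conditions on the selector values. This is the general argument (it works verbatim for closed subgroups of any Polish group and is how such facts are usually established); the route suggested by the paper's own setup, and by the cited source, is instead to pass through the tree coding of Fact~\ref{the_useful_fact} and observe that ``$T_C$ is a group tree'' (closure of the finite partial bijections under composition and inversion, plus the identity) is an arithmetical, in fact closed, condition on $\mathcal{T}(S_\infty)$. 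Your approach buys generality; the tree approach buys a sharper complexity bound and avoids the selection theorem.

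One step as written does not quite work: in the verification that (i)--(iii) imply $C$ is a subgroup, you form $d_{n_k}(C)\cdot d_{m_k}(C)^{-1}$ and claim it lies in $C$, but condition (iii) only covers products of two selector values, and $d_{m_k}(C)^{-1}$ need not be a selector value. The repair is immediate and you should state it that way: first, (ii) together with continuity of inversion on $S_\infty$ and closedness of $C$ gives $g^{-1}\in C$ for every $g\in C$ (take $d_{n_k}(C)\to g$, so $d_{n_k}(C)^{-1}\to g^{-1}$); then (iii) together with continuity of multiplication and closedness gives $gh\in C$ for all $g,h\in C$. The rest of the argument --- Borelness of $\{\emptyset\}$, of condition (i) as $\bigcap_k\mathcal{C}_{B(e,2^{-k})}$, of $E$ via the basis $(V_j)$ and closedness of $C$, and of the composed maps $C\mapsto(d_n(C)^{-1},C)$ and $C\mapsto(d_n(C)d_m(C),C)$ --- is correct.
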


	\begin{definition} By a tree on $\omega$ we mean a non-empty subset of $\omega^{<\omega}$ closed under initial segments (so in particular the empty sequence $\emptyset$ is the root of the tree).
\end{definition}

\begin{notation}\label{notation_branches} Given a tree $T \subseteq \omega^{< \omega}$ we denote by $[T]$ the set of $\eta \in \omega^\omega$ such that $\eta \restriction n \in T$ for all $n < \omega$, i.e., the set of infinite branches of $T$.
\end{notation}

	\begin{fact}\label{the_useful_fact} Let $\mathcal{T}(S_\infty)$ denote the set of trees on $\omega$ with no leaves topologized defining the distance between two trees $T_1 \neq T_2$ as $\frac{1}{2^{n}}$, where $n$ is least $n$ such that $T_1 \cap \omega^n \neq T_2 \cap \omega^n$. For $C \in \mrm{Sgp}(S_\infty)$, define $T_C \in \mathcal{T}(S_\infty)$ as follows: 
	$$\mathbf{B}: C \mapsto T_C = \{g \restriction n : g \in C \text{ and } n < \omega\} \cup \{g^{-1} \restriction n : g \in C \text{ and } n < \omega\}.$$
Then $\mathbf{B}: \mrm{Sgp}(S_\infty) \rightarrow \mathcal{T}(S_\infty)$ is Borel, it is injective and it has closed range; furthermore, when restricted to its range, the map $\mathbf{B}$ is a Borel isomorphism.
\end{fact}

	By Fact~\ref{the_useful_fact}, we can work with $\mathbf{B}(\mrm{Sgp}(S_\infty)) \subseteq \mathcal{T}(S_\infty)$ instead of $\mrm{Sgp}(S_\infty)$.


%

	\begin{proviso}\label{the_proviso} From now on we only look at inverse systems with onto bonding  maps.
\end{proviso}

\begin{notation}\label{space_inv} We can look at an inverse system $A = (A_n, f_{(n, m)} : m \leq n < \omega)$ of groups as a first-order structure in a language $L_{\mrm{inv}} = \{P_n, \cdot_n, f_{(n, m)} : m \leq n < \omega \}$, where:
	\begin{enumerate}[(1)]
	\item the $P_n$'s are disjoint predicates denoting the groups $A_n$'s;
	\item $\cdot_n$ is a binary function symbol, interpreted as the group operation on $A_n$;
	\item the $f_{(n, m)}$'s are function symbols interpreted as morphisms from $A_n$ to $A_m$. 
	\end{enumerate}
\end{notation}

	\begin{notation}\label{comment} Of course with respect to the language considered in \ref{space_inv} saying that a structure of the form $A = (A_n, f_{(n, m)} : m \leq n < \omega)$ is an inverse system is axiomatizable in first-order logic. Thus, we can consider the Borel space of countable $\mrm{Pro}$-$\mrm{Groups}$ with domain (a subset of) $\omega$. We also assume that our inverse systems $A = (A_n, f_{(n, m)} : m \leq n < \omega)$ are such that  for $n \neq m$, $\mrm{dom}(A_n) \cap \mrm{dom}(A_m) = \emptyset$. Finally, naturally when the groups $A_n$ are abelian, then we move to additive notation and so we write $+_{n}$ instead of $\cdot_n$ and $0_{A_n}$ instead of $e_{A_n}$.
\end{notation} 

	\begin{definition}\label{comment_embedding} Let $A = (A_n, f_{(n, m)} : m \leq n < \omega)$ be an inverse system as in \ref{comment}. For every $\bar{a} = (a_n : n < \omega) \in \varprojlim(A)$ we define a permutation $\pi_{\bar{a}}$ of $\omega$ as:
	$$\pi_{\bar{a}}(k) = \begin{cases} k +_{A_n} a_n, \;\;\, \text{ if there is } n < \omega \text{ s.t. } k \in A_n, \\
	k, \;\;\;\;\;\;\;\;\;\;\;\;\;\;\,\, \text{ if } k \notin \bigcup_{n < \omega} A_n.\end{cases}$$
\end{definition}

	\begin{observation} The map $\bar{a} \mapsto \pi_{\bar{a}}$ from \ref{comment_embedding} is an embedding of $\varprojlim(A)$ into~$S_\infty$.
\end{observation}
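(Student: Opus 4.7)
The plan is to verify the four standard conditions for a topological group embedding: $(i)$ each $\pi_{\bar{a}}$ lies in $S_\infty$, $(ii)$ the assignment $\bar{a} \mapsto \pi_{\bar{a}}$ is a group homomorphism, $(iii)$ it is injective, and $(iv)$ it is a homeomorphism onto its image. All four should follow almost directly from the disjoint-domain convention of \ref{comment}, which is doing most of the bookkeeping.

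For $(i)$, I would observe that since each $A_n$ is closed under $+_{A_n}$, the translation $k \mapsto k +_{A_n} a_n$ is a bijection of $A_n$ onto itself; outside $\bigcup_{n < \omega} A_n$ the map is the identity. Since $\mrm{dom}(A_n) \subseteq \{p_n^m : 0 < m < \omega\}$ ensures that the $A_n$'s are pairwise disjoint subsets of $\omega$, the piecewise recipe in \ref{comment_embedding} glues to a well-defined permutation of $\omega$ whose inverse is $\pi_{-\bar{a}}$. For $(ii)$, given $\bar{a}, \bar{b} \in \varprojlim(A)$ and $k \in A_n$, I would compute
\[
\pi_{\bar{a}+\bar{b}}(k) \;=\; k +_{A_n} (a_n +_{A_n} b_n) \;=\; (k +_{A_n} b_n) +_{A_n} a_n \;=\; \pi_{\bar{a}}(\pi_{\bar{b}}(k)),
\]
while for $k \notin \bigcup_n A_n$ both sides equal $k$. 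For $(iii)$, if $\pi_{\bar{a}} = \pi_{\bar{b}}$, then in particular $a_n = \pi_{\bar{a}}(0_{A_n}) = \pi_{\bar{b}}(0_{A_n}) = b_n$ for every $n < \omega$.

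Finally, for $(iv)$, I would use that a basic clopen neighborhood of $\bar{a}$ in $\varprojlim(A)$ fixes finitely many coordinates $a_{n_1}, \ldots, a_{n_\ell}$, while a basic clopen neighborhood of a permutation of $\omega$ in $S_\infty$ prescribes its values on an initial segment. Continuity of $\bar{a} \mapsto \pi_{\bar{a}}$ follows because $\pi_{\bar{a}}(k)$ for $k \in A_n$ depends only on $a_n$, so to control $\pi_{\bar{a}}$ on an initial segment it suffices to control the finitely many $a_n$'s with $A_n$ meeting that segment. Continuity of the inverse map on the image is obtained symmetrically: for each prescribed coordinate $a_{n_i}$ pick the element $0_{A_{n_i}} \in A_{n_i} \subseteq \omega$, and note that agreement of $\pi_{\bar{b}}$ with $\pi_{\bar{a}}$ at these finitely many distinguished points already forces $b_{n_i} = a_{n_i}$. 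I do not anticipate any real obstacle; the proof is essentially a routine verification once one unpacks the disjoint-domain convention of \ref{comment}, which makes each translation uniquely recoverable from its action at a single point of its support.
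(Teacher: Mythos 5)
Your verification is correct: the disjointness of the sorts guaranteed by \ref{comment} makes each $\pi_{\bar{a}}$ a well-defined permutation, the displayed computation gives the homomorphism property (using commutativity), evaluation at the points $0_{A_n}$ gives injectivity, and the finite-dependence argument in both directions gives the topological embedding. The paper states this observation without proof, treating it as immediate, and your write-up is exactly the routine check being omitted.
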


	We omit the details of the next proposition as its proof is standard (use e.g. \ref{the_useful_fact}).

\begin{proposition}\label{inv_is_Borel} The map $A \mapsto \varprojlim(A)$ from $\mrm{Pro}$-$\mrm{Groups}$ (recalling \ref{comment}) into $\mrm{Sgp}(S_\infty)$ (recalling \ref{inv_is_Borel_pre} and the embedding described in \ref{comment_embedding}) is Borel.
\end{proposition}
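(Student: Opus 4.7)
The plan is to use Fact \ref{the_useful_fact} to reduce the proposition to showing that, for every $\sigma \in \omega^{<\omega}$, the set $\mathcal{B}_\sigma$ of those $A \in \mrm{Pro}$-$\mrm{Groups}$ with $\sigma \in T_{\varprojlim(A)}$ is Borel; this suffices since the Borel structure on $\mathcal{T}(S_\infty)$ is generated by the cylinders $\{T : \sigma \in T\}$.

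Next, I unpack $\sigma \in T_{\varprojlim(A)}$ via Definition \ref{comment_embedding}: it asserts that either $\pi_{\bar{a}} \restriction |\sigma| = \sigma$ or $\pi_{\bar{a}}^{-1} \restriction |\sigma| = \sigma$ for some $\bar{a} \in \varprojlim(A)$. I would handle the first disjunct explicitly, the second being analogous since $\pi_{\bar{a}}^{-1} = \pi_{-\bar{a}}$. By inspection of the formula defining $\pi_{\bar{a}}$, the condition $\pi_{\bar{a}} \restriction |\sigma| = \sigma$ decomposes, one $k < |\sigma|$ at a time, into: either $k \notin \bigcup_n A_n$ and $\sigma(k) = k$; or there is a unique $m$ with $k, \sigma(k) \in A_m$, in which case $a_m = \sigma(k) -_{A_m} k$ is forced. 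Letting $M \subseteq \omega$ be the (finite) set of indices $m$ so produced, the tuple $(a_m : m \in M)$ is entirely determined by $\sigma$ together with the predicates $P_m^A$; it then remains only to require internal consistency (two distinct $k$'s landing in the same $A_m$ must yield the same $a_m$) and compatibility with the binding maps, $f_{(m, m')}(a_m) = a_{m'}$ for $m' \leq m$ in $M$ --- all of which is a finite quantifier-free condition in $L_{\mrm{inv}}$.

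Finally, I would invoke Proviso \ref{the_proviso}: surjectivity of the binding maps guarantees that any such consistent finite tuple $(a_m : m \in M)$ extends to a bona fide element of $\varprojlim(A)$, by a routine diagonal construction --- first push $(a_m : m \in M)$ down to every $n < \max(M)$ via suitable $f_{(m, n)}$, then lift upward along a cofinal chain $\max(M) < n_0 < n_1 < \cdots$ using surjectivity of the successive $f_{(n_{k+1}, n_k)}$ and pushing down again to fill in the intermediate indices. Putting the pieces together, $A \in \mathcal{B}_\sigma$ is equivalent to a countable disjunction, ranging over the finite candidate sets $M$ and the assignment $k \mapsto m_k$, of quantifier-free requirements on $A$; this presents $\mathcal{B}_\sigma$ as a countable union of clopen subsets of the standard Borel space of $L_{\mrm{inv}}$-structures on $\omega$, and hence as Borel. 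The only non-routine step is the extension argument just sketched, which is also the only place where Proviso \ref{the_proviso} is essential; the remainder is a mechanical unpacking of Definitions \ref{comment} and \ref{comment_embedding}.
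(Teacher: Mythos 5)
The paper omits the proof of this proposition, stating only that it is standard and pointing to Fact~\ref{the_useful_fact}; your argument is a correct instantiation of exactly that route, reducing Borelness to the cylinder sets generating the Borel structure on $\mathcal{T}(S_\infty)$ and mechanically unwinding Definitions~\ref{comment} and~\ref{comment_embedding}. The extension step using surjectivity of the binding maps (Proviso~\ref{the_proviso}) is indeed the only nontrivial point, and your treatment of it is sound.
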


\section{A few crucial claims on $\mrm{TFAB}$'s}

\subsection{The engine}	The following construction will be used to prove Theorems~\ref{second_th} and \ref{being_almost_free_coanalytic}.

	\begin{notation} $\mathbb{P}$ denotes the set of prime numbers.
\end{notation}

	\begin{definition}\label{the_engine} Let $T \subseteq \omega^{<\omega}$ be a tree with no leaves and $L: T \rightarrow \mathcal{P}(\mathbb{P})$ be such that:
	\begin{enumerate}[(a)]
	\item if $\eta \trianglelefteq \nu$, then $L(\nu) \subseteq L(\eta)$;
	\item if $\eta \in T$, then $L(\eta) = \bigcup \{L(\nu) : \nu \in T \text{ and } \nu \text{ is a successor of $\eta$} \}$.
\end{enumerate}
We define an inverse system $\mrm{inv}(T, L) = (G_n, f_{(m, n)} : m \leq n < \omega)$ as follows:
\begin{enumerate}[(i)]
	\item $Z_n :=\sum \{ x_\eta : \eta \in T \cap \omega^n\} \leq G_n \leq \sum \{\mathbb{Q} x_\eta : \eta \in T \cap \omega^n\} := Q_n$;
	\item $G_n = \langle \frac{1}{p} x_\eta : \eta \in T \cap \omega^n \text{ and } p \in L(\eta) \rangle_{Q_n}$;
	\item for $m \leq n < \omega$, $f_{(n, m)}$ is the unique homomorphism from $G_n$ into $G_m$ s.t.:
	$$\eta \in T \cap \omega^n \; \Rightarrow \; x_\eta \mapsto x_{\eta \restriction m}.$$
\end{enumerate}
Notice that the inverse system $\mrm{inv}(T, L) = (G_n, f_{(n, m)} : m \leq n < \omega)$ is well-defined because of conditions (a) and (b) above. It being well-defined, we do the following:
$$(T, L) \mapsto \mrm{inv}(T, L) \mapsto \varprojlim(\mrm{inv}(T, L)) := G(T, L).$$
So the group $G(T, L)$ is a well-defined Polish non-Archimedean abelian group, when equipped with the topology inherited from the product topology on $\prod_{n < \omega} G_n$. 
\end{definition}


	\begin{remark} Notice that the inverse systems $\mrm{inv}(T, L) = (G_n, f_{(n, m)} : m \leq n < \omega)$ defined in \ref{the_engine} have bonding maps $f_{(n, m)}$ which are onto; this is because we ask that the tree $T$ has no leaves and because we ask that condition (b) from there holds. Recall that in \ref{the_engine} we only consider trees with no leaves and so, for $T$'s as in \ref{the_engine}, the set $[T]$ is always non-empty (cf. \ref{notation_branches} for a definition of $[T]$). 
\end{remark}

	\begin{remark}\label{remark_dir_indec} Notice also that for $\mrm{inv}(T, L) = (G_n, f_{(n, m)} : m \leq n < \omega)$ as in \ref{the_engine} we have that the groups $G_n$ are completely decomposable, i.e., they are direct sums of groups of rank $1$, in fact it follows immediately from the definitions that:
	$$G_n = \sum_{\eta \in \omega^n \cap T} \langle x_\eta \rangle^*_{G_n},$$
where $\langle X \rangle^*_{G_n}$ denotes pure closure (cf. \cite[pg. 151]{fuchs_book}).
\end{remark}


	For background on the notions that we now introduce see e.g. \cite[Chapter~13]{fuchs_book}. We will often abbreviate ``torsion-free abelian group'' with TFAB.

	\begin{definition} Let $A \in \mrm{TFAB}$ and let $(p_i : i < \omega)$ be the list of the prime numbers in increasing order. For $a \in A$, we define the characteristic of $a$ in $A$, denoted as $\chi_A(a) = \chi(a)$, as follows:
	$\chi(a) = (h_{p_i}(a) : i < \omega),$
where $h_{p_i}(a)$ is the supremum of the $k < \omega$ such that $p_i^k \, \vert \, a$, where the supremum is taken in $\omega \cup \{\infty\}$ (so the value $\infty$ is allowed).
\end{definition}

	\begin{definition}\label{def_type} \begin{enumerate}[(1)]
	\item Two characteristics $(k_i : i < \omega)$ and $(\ell_i : i < \omega)$ are said
to be equivalent if $k_i = \ell_i$ for almost all $i < \omega$ and both $k_i$ and $\ell_i$ are finite whenever $k_i \neq \ell_i$. The equivalence classes of characteristics are called {\em types}. 
	\item Given $A \in \mrm{TFAB}$ we denote by $\mathbf{t}_A(a)$ the type of $a$ in $A$; we may simply write $\mathbf{t}(a)$ when $A$ is clear from the context.
	\item We denote by $\mathbf{0}$ the type of the characteristic $(0, 0, ...)$.
	\item For $A \in \mrm{TFAB}$ and $a, b \in A$ we say that $\mathbf{t}(a) \leq \mathbf{t}(b)$ if there are characteristics $\chi = (k_i : i < \omega) \in \mathbf{t}(a)$ and $\nu = (\ell_i : i < \omega) \in \mathbf{t}(b)$ such that $(k_i : i < \omega) \leq (\ell_i : i < \omega)$, where the order is the pointwise order, i.e., $k_i \leq \ell_i$, for all $i < \omega$.
\end{enumerate} 
\end{definition}

	We will need the following well-known facts.
	
	\begin{fact}[{\cite[(C) on pg. 411]{fuchs_book}}]\label{Fuchs_fact} Let $G$ and $H$ be torsion-free abelian and suppose that there is a homomorphism $f: G \rightarrow H$, then, for every $g \in G$, $\mathbf{t}(g) \leq \mathbf{t}(f(g))$.
\end{fact}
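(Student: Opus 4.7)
The plan is to establish the stronger pointwise inequality $\chi(g) \leq \chi(f(g))$ between the characteristics themselves, from which $\mathbf{t}(g) \leq \mathbf{t}(f(g))$ follows immediately by Definition~\ref{def_type}(4), since $\chi(g) \in \mathbf{t}(g)$ and $\chi(f(g)) \in \mathbf{t}(f(g))$ are valid representatives of the two types.

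First I would prove the core divisibility transfer claim: for every prime $p$ and every $k < \omega$, if $p^k \mid g$ in $G$ then $p^k \mid f(g)$ in $H$. Indeed, if $p^k \mid g$, there exists $g' \in G$ with $g = p^k g'$; applying the homomorphism $f$ (and using that $f$ respects scalar multiplication by integers, which is automatic) yields $f(g) = f(p^k g') = p^k f(g')$, witnessing $p^k \mid f(g)$.

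Next I would pass to heights by taking suprema. By definition $h_p(g) = \sup\{k < \omega : p^k \mid g\}$ in $\omega \cup \{\infty\}$, and the previous step shows that the set of $k$ witnessing divisibility of $g$ by $p^k$ is contained in the corresponding set for $f(g)$. Hence $h_p(g) \leq h_p(f(g))$ for every prime $p$. Applying this to the enumeration $(p_i : i < \omega)$ of primes gives $\chi(g) = (h_{p_i}(g) : i < \omega) \leq (h_{p_i}(f(g)) : i < \omega) = \chi(f(g))$ in the pointwise order.

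There is no serious obstacle here; the torsion-freeness hypothesis is not actually used in the divisibility step but guarantees that the characteristic and type invariants behave as in the Fuchs setting. The only small subtlety is the definitional bookkeeping: one must verify that the pointwise inequality of characteristics is exactly what the comparison relation on types in \ref{def_type}(4) requires, which is transparent from the definition. Thus the conclusion $\mathbf{t}(g) \leq \mathbf{t}(f(g))$ follows at once.
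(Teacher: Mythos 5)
Your proof is correct: the divisibility transfer $g = p^k g' \Rightarrow f(g) = p^k f(g')$ gives $h_p(g) \leq h_p(f(g))$ for every prime $p$, hence $\chi(g) \leq \chi(f(g))$ pointwise, and these characteristics are legitimate representatives of $\mathbf{t}(g)$ and $\mathbf{t}(f(g))$ as required by Definition~\ref{def_type}(4). The paper gives no proof of this statement (it is quoted from Fuchs), and your argument is exactly the standard one.
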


	\begin{fact}{\cite[Theorem~2.3]{mekler}}\label{first_theorem_copy} Let $A \in \mrm{TFAB}$. Then $A$ is $\aleph_1$-free if and only if, for every finite $S \subseteq A$, the pure closure of $S$ in $A$ (denoted $\langle S \rangle^*_A$) is free abelian.
\end{fact}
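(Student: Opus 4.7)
The plan is to prove both implications separately, with the forward direction being essentially immediate from the definition and the reverse direction requiring a union-of-splittings argument.

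For $(\Rightarrow)$, I would observe that for any finite $S \subseteq A$ the pure closure $\langle S \rangle^*_A$ has rank at most $|S|$, since it embeds into the finite-dimensional $\mathbb{Q}$-vector space $\langle S \rangle \otimes_{\mathbb{Z}} \mathbb{Q}$. In particular $\langle S \rangle^*_A$ is countable, so applying the $\aleph_1$-freeness assumption to this subgroup directly yields that it is free abelian.

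For $(\Leftarrow)$, given a countable subgroup $B \leq A$, I enumerate $B = \{b_n : n < \omega\}$ and set $B_n := \langle b_0, \dots, b_{n-1} \rangle^*_A$. By hypothesis each $B_n$ is free abelian, and since its rank is bounded by $n$ it is finitely generated. The structural point is that $B_n$ is pure in $B_{n+1}$: if $x \in B_{n+1}$ satisfies $mx \in B_n$ for some nonzero $m \in \mathbb{Z}$, then $k m x \in \langle b_0,\dots, b_{n-1}\rangle$ for some $k \neq 0$, so $x \in \langle b_0,\dots, b_{n-1}\rangle^*_A = B_n$. Purity forces $B_{n+1}/B_n$ to be torsion-free; being also finitely generated, it is free abelian, so the short exact sequence
\[
0 \longrightarrow B_n \longrightarrow B_{n+1} \longrightarrow B_{n+1}/B_n \longrightarrow 0
\]
splits, yielding $B_{n+1} = B_n \oplus F_n$ for some finitely generated free $F_n$. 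Iterating gives $B_\infty := \bigcup_{n < \omega} B_n = B_0 \oplus \bigoplus_{n < \omega} F_n$, which is free abelian. Since $B \subseteq B_\infty$ and subgroups of free abelian groups are free abelian (a classical theorem), $B$ is free.

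I expect no serious obstacle here: the delicate point is the purity of $B_n$ inside $B_{n+1}$, but this is built into the definition of pure closure within the common ambient group $A$, as sketched above. Once purity is in hand, the rest is an assembly of standard structural facts about torsion-free abelian groups of finite rank together with the classical theorem on subgroups of free abelian groups.
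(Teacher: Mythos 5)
The paper states this as a Fact and gives no proof, simply citing \cite[Theorem~2.3]{mekler}, so there is no in-paper argument to compare against. Your proof is correct and is essentially the standard argument from that reference: the forward direction via the observation that $\langle S \rangle^*_A$ has finite rank, hence is countable, and the reverse direction via the chain $B_n = \langle b_0,\dots,b_{n-1}\rangle^*_A$ of pure, finitely generated free subgroups whose successive quotients are finitely generated torsion-free, hence free, so each inclusion splits and the union is free; the final appeal to the fact that subgroups of (countable) free abelian groups are free closes the argument. All the steps you flag as delicate (purity of $B_n$ in $B_{n+1}$, finite generation of $B_{n+1}/B_n$) are justified correctly.
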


%


\subsection{Nice pairs}\label{sec_nice_pairs}

	We now present a technical definition necessary for our approach to work, the fact that this or similar technical adjustments are necessary is justified by Section~\ref{final_subsection}.

	\begin{definition}\label{def_nice} We call $(T, L)$ a nice pair when for some $S$ and $\bar{p}$ we have that:
	\begin{enumerate}[(a)]
    \item $S$ is a subtree of $T$ and $T$ is a subtree of $\omega^{< \omega}$ with no leaves;
    \item\label{def_nice_item2} $\eta \in S$ implies that $0 \notin \mrm{ran}(\eta)$;
    \item $\bar{p} = (p_\eta : \eta \in S)$ is a sequence of primes without repetitions;
    \item $S^+ = \{ \eta^\frown\nu : \eta \in S, \; \nu \in \{0\}^{< \omega}\} \subseteq T$;
    \item $p_{\eta^\frown\nu} = p_\eta$, when $\eta \in S$ and $\nu \in \{0\}^{< \omega}$;
    \item $L$ is the function with domain $T$ defined as follows:
  $$L(\eta) = \{ p_\nu : (\nu \triangleleft \eta \text{ or } \eta \trianglelefteq \nu) \text{ and } \nu \in S^+\}.$$
\end{enumerate}
\end{definition}

	\begin{observation}\label{crucial_obs} If $(T, L)$ a nice pair, then $(T, L)$ is as in \ref{the_engine}.
\end{observation}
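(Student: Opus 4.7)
The plan is to verify the two structural conditions (a) and (b) of Definition \ref{the_engine} for the pair $(T, L)$ provided by Definition \ref{def_nice}. The remaining requirements of \ref{the_engine} --- that $T$ be a tree with no leaves and that $L$ take values in $\mathcal{P}(\mathbb{P})$ --- are immediate from clause (a) of \ref{def_nice} together with the fact that $\bar{p}$ is a sequence of primes, so only the monotonicity and covering conditions need arguing.

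For monotonicity, fix $\eta \trianglelefteq \nu$ and $p = p_\mu \in L(\nu)$ with $\mu \in S^+$; by definition of $L$, either $\mu \triangleleft \nu$ or $\nu \trianglelefteq \mu$. In the second case $\eta \trianglelefteq \nu \trianglelefteq \mu$ and we are done. In the first case $\mu$ and $\eta$ are both initial segments of $\nu$, hence comparable, so either $\mu \trianglelefteq \eta$ or $\eta \triangleleft \mu$; whichever holds, one clause of the definition of $L(\eta)$ applies to $\mu$, placing $p_\mu$ in $L(\eta)$.

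For the covering condition, the inclusion $\supseteq$ is free from monotonicity. For $\subseteq$, take $p_\mu \in L(\eta)$ with $\mu \in S^+$ and split by the relation of $\mu$ to $\eta$: if $\mu \triangleleft \eta$, any successor $\nu$ of $\eta$ in $T$ satisfies $\mu \triangleleft \nu$, so $p_\mu \in L(\nu)$, and such a $\nu$ exists because $T$ has no leaves; if $\eta \triangleleft \mu$, take $\nu = \mu \restriction (|\eta|+1)$, which lies in $T$ and is a successor of $\eta$ with $\nu \trianglelefteq \mu$; and if $\eta = \mu$, observe that $\mu^\frown\langle 0\rangle \in S^+ \subseteq T$ by clause (d) of Definition \ref{def_nice}, so it is a successor of $\eta$ in $T$ and $p_\mu \in L(\mu^\frown\langle 0\rangle)$ via $\mu \triangleleft \mu^\frown\langle 0\rangle$. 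The only mildly delicate step is this equality case, which is precisely the reason $S^+$ is built with closure under appending $0$'s: it guarantees every $\mu \in S^+$ a canonical successor in $T$ that still witnesses the prime $p_\mu$ through the first clause defining $L$. With this covered, the verification is the routine comparability-of-initial-segments analysis that justifies calling the observation immediate.
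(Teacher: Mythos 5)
Your verification is correct, and it fills in exactly the routine check the paper leaves implicit: the paper offers no written argument for this observation (it is treated as immediate), and the intended content is precisely the comparability-of-initial-segments analysis you carry out, with the equality case $\eta=\mu$ handled via the successor $\mu^\frown\langle 0\rangle\in S^+\subseteq T$ guaranteed by clause (d) of Definition \ref{def_nice}. Nothing to add.
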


	\begin{convention}\label{the_convention} For the rest of this subsection, we fix $(T,L)$, $S$, $S^+$ and $\bar{p}$ as in \ref{def_nice} and we let $G = G(T, L)$. For $\eta \in [T]$, we let $L(\eta) = \bigcap \{L(\eta \restriction n) :  n < \omega \}$. Also, for $n < \omega$, we let $f_{(\omega, n)}$ be the obvious projection of $G$ onto $G_n$. For $\nu \in [T]$, we let $x_{\nu} = (x_{\nu \restriction n} : n < \omega) \in G$. Recall also that $f_{(n, m)}$ denote the bonding map from $G_n$ onto $G_m$.
\end{convention}

	\begin{cclaim}\label{inf_branch_claim} If $S$ has an $\omega$-branch (i.e., there is $\eta \in [T]$ such that, for all $n < \omega$, $\eta \restriction n \in S$), then $G$ is not \mbox{$\mathbb{Z}$-homogeneous, and so $G$ is not $\aleph_1$-free.}
\end{cclaim}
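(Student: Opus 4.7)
The plan is to exhibit an explicit element of $G$ whose type differs from $\mathbf{0}$, witnessing the failure of $\mathbb{Z}$-homogeneity (the $\aleph_1$-freeness clause then follows for free from the implication chain ``separable $\Rightarrow$ torsionless $\Rightarrow \aleph_1$-free $\Rightarrow \mathbb{Z}$-homogeneous'' recalled after the definition of separability). Fix $\eta \in [T]$ with $\eta \restriction n \in S$ for every $n < \omega$, and consider the element $x_\eta = (x_{\eta \restriction n} : n < \omega) \in G$ introduced in \ref{the_convention}.

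The first step is to observe, using the definition of $L$ in Definition~\ref{def_nice}(f), that for every fixed $m < \omega$ the prime $p_{\eta \restriction m}$ lies in $L(\eta \restriction n)$ for every $n < \omega$: indeed if $n \le m$ then $\eta \restriction n \trianglelefteq \eta \restriction m$ with $\eta \restriction m \in S \subseteq S^+$, while if $n > m$ then $\eta \restriction m \triangleleft \eta \restriction n$ with $\eta \restriction m \in S^+$. Writing $p = p_{\eta \restriction m}$, this means that for every $n$ and every $k \ge 1$ the element $\tfrac{1}{p^k} x_{\eta \restriction n}$ belongs to $G_n$, and moreover $f_{(n,n')}(\tfrac{1}{p^k} x_{\eta \restriction n}) = \tfrac{1}{p^k} x_{\eta \restriction n'}$ for $n' \le n$, so the sequence $(\tfrac{1}{p^k} x_{\eta \restriction n})_{n < \omega}$ is a coherent element of $\varprojlim \mrm{inv}(T,L) = G$ which multiplied by $p^k$ yields $x_\eta$. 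Consequently $h_{p}(x_\eta) = \infty$ in $G$.

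The second step is to note that since $\bar p = (p_\nu : \nu \in S)$ is a sequence of primes without repetitions (Definition~\ref{def_nice}(c)) and the branch $\eta$ contains the distinct nodes $\eta \restriction 0, \eta \restriction 1, \ldots \in S$, the collection $\{p_{\eta \restriction m} : m < \omega\}$ is an infinite set of distinct primes on which the characteristic of $x_\eta$ takes value $\infty$. Since $x_\eta \neq 0$ (its $G_0$-component is $x_\emptyset \neq 0$), any representative of $\mathbf{t}(x_\eta)$ has $\infty$ at infinitely many coordinates and therefore cannot be equivalent to the zero characteristic $(0,0,\dots)$. Hence $\mathbf{t}(x_\eta) \neq \mathbf{0}$, so $G$ is not $\mathbb{Z}$-homogeneous, and the implication chain above then forces $G$ to fail to be $\aleph_1$-free as well.

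There is no real obstacle here; the only subtle point is checking that $p_{\eta \restriction m} \in L(\eta \restriction n)$ holds uniformly in $n$ (both for $n$ above and below $m$), which is where the defining formula for $L$ in terms of the union of $\triangleleft$- and $\trianglelefteq$-related nodes of $S^+$ is used crucially. Everything else is bookkeeping: verifying that the divisors assemble into an element of the inverse limit and translating ``infinitely many $\infty$'s in the characteristic'' into ``type $\neq \mathbf{0}$''.
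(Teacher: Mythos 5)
Your overall strategy is exactly the paper's: take the branch $\eta$, observe that $x_\eta=(x_{\eta\restriction n}:n<\omega)$ is divisible in $G$ by each of the infinitely many distinct primes $p_{\eta\restriction m}$, and conclude that $\mathbf{t}(x_\eta)\neq\mathbf{0}$, hence $G$ is not $\mathbb{Z}$-homogeneous and a fortiori not $\aleph_1$-free. The verification that $p_{\eta\restriction m}\in L(\eta\restriction n)$ for all $n$ (splitting into $n\leq m$ and $n>m$, using $S\subseteq S^+$) is correct and is the only point needing care.

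However, your first step contains a false intermediate claim: you assert that $\tfrac{1}{p^k}x_{\eta\restriction n}\in G_n$ for every $k\geq 1$ and deduce $h_p(x_\eta)=\infty$. By Definition~\ref{the_engine}(ii) and the explicit description of $K_\nu$ in \ref{Keta}, the $x_\nu$-component of $G_n$ is $K_\nu x_\nu$ where $K_\nu$ admits only \emph{squarefree} denominators (products of distinct primes of $L(\nu)$); so $h_p(x_{\eta\restriction n})=1$, not $\infty$, and the coherent sequence $(\tfrac{1}{p^k}x_{\eta\restriction n})_n$ does not exist in $G$ for $k\geq 2$. This does not damage the conclusion: having $h_p(x_\eta)\geq 1$ for infinitely many distinct primes $p$ already means the characteristic of $x_\eta$ is nonzero at infinitely many coordinates, and by Definition~\ref{def_type}(1) such a characteristic cannot be equivalent to $(0,0,\dots)$. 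So replace ``$\infty$ at infinitely many coordinates'' by ``nonzero at infinitely many coordinates'' and the proof is correct and matches the paper's.
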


	\begin{proof} In this case $L(\eta) = \bigcap \{L(\eta \restriction n) : n < \omega \} = \{p_{\eta \restriction n} : n < \omega\}$ is infinite, and so in $G$ we have that $x_\eta$ is divisible by $p$ for every $p \in L(\eta)$.
	\end{proof}
	
	\begin{definition}\label{Keta} For $\eta \in T \cup [T]$, let $K_\eta$ be the additive subgroup of $\mathbb{Q}$ generated by $\{\frac{1}{p} : p \in L(\eta)\} \subseteq \mathbb{Q}$; explicitly, $K_\eta$ is the subgroup of $\mathbb{Q}$ consisting of:
	$$\{\frac{b}{\prod_{\ell < k}p_\ell} : b \in \mathbb{Z}, \; k < \omega, \; p_\ell \in L(\eta)\}.$$
\end{definition}
	
	\begin{mclaim}\label{the_hammer} If $S$ has no $\omega$-branch, then $G = G(T, L)$ is separable (and so $G$ is torsionless, and so $G$ is $\aleph_1$-free).
\end{mclaim}

	\begin{proof} 

\noindent We show that $G$ is separable. To this end, let $X \subseteq G$ be finite.
\begin{enumerate}[$(*_1)$]
	\item \underline{We claim that it suffices to prove} that there are $n < \omega$ and finite $Y \subseteq \omega^n \cap T$ such that letting $\pi_{(n, Y)}$ be the natural projection of $G_n$ onto $G_{(n, Y)} := \langle Y \rangle^*_{G_n}$ we have:
	\begin{enumerate}[(a)]
	\item for every $\nu \in Y$, $L(\nu)$ is finite;
	\item $g = \pi_{(n, Y)} \circ f_{(\omega, n)}$;
	\item $g \in \mrm{Hom}(G, G_{(n, Y)})$;
	\item $g_0 := g \restriction \langle X \rangle^*_G$ is an isomorphism from $\langle X \rangle^*_G$ onto $G_{(n, Y)} = \langle Y \rangle^*_{G_n}$.
	\end{enumerate}
\end{enumerate}
We show that $(*_1)$ holds, i.e., that the conditions listed there suffice to conclude that $G$ is separable. For this purpose, first of all, notice that $G_{(n, Y)}$ is a free direct summand of $G$, because $G_n = \bigoplus \{K_\nu x_\nu : \nu \in T \cap \omega^n \}$ and, for every $\nu \in Y$, $L(\nu)$ is finite and so $K_\nu \cong \mathbb{Z}$. Secondly, observe that then also $\langle X \rangle^*_G$ is free, since by (c), $g_0$ is an isomorphism. Observe now that, by clause (b), we have that $g_0^{-1} \circ g$ is an homomorphism from $G$ into $\langle X \rangle^*_G$. Also, by clause (c), clearly $g_0^{-1}$ is an isomorphism from $G_{(n, Y)}$ onto $\langle X \rangle^*_G$. Notice now that for $a \in \langle X \rangle^*_G$ we have that:
$$g_0^{-1} \circ g(a) = g_0^{-1}(g(a)) = g_0^{-1}(g_0^{-1}(a)) = a.$$
Hence, we have that $g_0^{-1} \circ g$ is a retraction of $G$ onto the free subgroup $\langle X \rangle^*_G$ and so $X$ is contained in a free direct summand of $G$, as desired. Hence, $(*_{1})$ holds indeed.

\medskip \noindent So now that we have showed that $(*_{1})$ suffices, for the rest of the proof we will focus on showing that the conditions from $(*_{1})$ can be fulfilled (cf. the final $(*_{19})$).
\begin{enumerate}[$(*_{2})$]
	\item For every $n < \omega$, $f_{(n+1, n)}$ maps $f_{(\omega, n+1)}(\langle X \rangle^*_G)$ onto $f_{(\omega, n)}(\langle X \rangle^*_G)$, hence:
	\begin{enumerate}[$(\cdot_1)$]
	\item $\mrm{rk}(f_{(\omega, n+1)}(\langle X \rangle^*_G)) \geq \mrm{rk}(f_{(\omega, n)}(\langle X \rangle^*_G))$;
	\item those ranks are $\leq |X|$ which is finite.
	\end{enumerate}
	\end{enumerate}
	\begin{enumerate}[$(*_{3})$]
	\item For some $n_* < \omega$, $( \mrm{rk}(f_{(\omega, m)}(\langle X \rangle^*_G)) : m \geq n_*)$  is constant, call it $k_*$, and for every $n_2 > n_1 \geq n_*$, $f_{(n_2, n_1)} \restriction f_{(\omega, n_2)}(\langle X \rangle^*_G)$ is an isomorphism from $f_{(\omega, n_2)}(\langle X \rangle^*_G)$ onto $f_{(\omega, n_1)}(\langle X \rangle^*_G)$.
	\end{enumerate}
[Why? As $n_2 > n_1 \geq n_*$ and so $\mrm{rk}(f_{(\omega, n_2)}(\langle X \rangle^*_G)) =\mrm{rk}(f_{(\omega, n_1)(\langle X \rangle^*_G)})$, recalling that $f_{(n_2, n)}$ maps $f_{(\omega, n_2)}(\langle X \rangle^*_G)$ onto $f_{(\omega, n_1)}(\langle X \rangle^*_G)$, we have that the map $f_{(n_2, n_1)} \restriction f_{(\omega, n_2)}(\langle X \rangle^*_G)$ has trivial kernel and so it is $1$-to-$1$, and trivially it is also onto.]
\begin{enumerate}[$(*_{4})$]
	\item If $n \geq n_*$, then $f_{(\omega, n)} \restriction \langle X \rangle^*_G$ is an isomorphism from $\langle X \rangle^*_G$ onto $f_{(\omega, n)}(\langle X \rangle^*_G)$.
\end{enumerate}
Why $(*_{4})$ holds? Let $n_* \leq n < \omega$ and suppose the negation of the thesis. Then there is $y \in \langle X \rangle^*_G \setminus \{ 0 \}$ such that $f_{(\omega, n)}(y) = 0$. Let $y = (y_m : m < \omega)$. Then we can find $k \geq n$ such that $f_{(\omega, k)}(y) = y_k \neq 0$. But then $y_k \in G_k \setminus \{0\}$ and $f_{(k, n)}(y_k) = 0$, and so $f_{(k, n)}$ has non-trivial kernel, contradicting the choice of $n_*$ (recall that by hypothesis $n \geq n_*$ and that $f_{(k, n)}: G_k \rightarrow G_n$ is onto $G_n$). So $(*_{4})$ holds indeed.
\begin{enumerate}[$(*_{5})$]
	\item W.l.o.g. $|X| = k_*$, equivalently, $X$ is independent.
\end{enumerate}
\begin{enumerate}[$(*_{6})$]
	\item For $n < \omega$.
	\begin{enumerate}[(a)]
	\item Let $Y_n \subseteq T \cap \omega^n$ be finite and such that we have:
	$$f_{(\omega, n)}(\langle X \rangle^*_G) \subseteq G_{(n, Y_n)} = \bigoplus \{ K_\nu x_\nu : \nu \in Y_n\}.$$
	\item For $y \in X$, let $f_{(\omega, n)}(y) = \sum \{a_{(y, n, \nu)} x_\nu : \nu \in Y_n\}$, where $a_{(y, n, \nu)} \in K_\nu$.
	\item  W.l.o.g. for every $n \geq n_*$ and $\nu \in Y_n$, there is $y \in X$ such that $a_{(y, n, \nu)} \neq 0$.
\end{enumerate}	
\end{enumerate}
[Why? Obvious.]
\begin{enumerate}[$(*_{7})$]
	\item Let $Y^\star_n = \{\nu \in Y_n : L(\nu) \text{ is finite} \}$.
\end{enumerate}
\begin{enumerate}[$(*_{8})$]
	\item Recalling the definition of $\pi_{(n, Y^\star_n)}$ from $(*_{1})$, we have that the sequence
	$$(\mrm{rk}(\pi_{(n, Y^\star_n)} \circ f_{(\omega, n)}(\langle X\rangle^*_G)) : n \geq n_*)$$
is eventually constant, say for $n \geq n_{\star}$, and let $k_{\star}$ be this constant value.
\end{enumerate}
\begin{enumerate}[$(*_{9})$]
	\item Assume $n_* \leq n_1 \leq n_2 < \omega$, then:
	\begin{enumerate}[$(\cdot_1)$]
	\item if $\nu \in Y_{n_1}$, then there is $\rho \in Y_{n_2}$ such that $\nu \triangleleft \rho$;
	\item if $\nu \in Y^\star_{n_1}$ and $\nu \triangleleft \rho \in Y_{n_2}$, then $\rho \in Y^\star_{n_2}$ ($Y^\star_{n_1}$ was defined in $(*_{7})$). 
	\end{enumerate}
	\end{enumerate}
[Why $(*_{9})(\cdot_1)$? By $(*_{6})$(c).]
\begin{enumerate}[$(*_{10})$]
	\item W.l.o.g. $n_\star = n_*$ (we shall use this freely).
\end{enumerate}
\begin{enumerate}[$(*_{11})$]
	\item Let $(y_\ell : \ell < k_*)$ be a list of $|X|$ (recall $(*_{5})$).
\end{enumerate}
\begin{enumerate}[$(*_{12})$]
	\item For every $n < \omega$ and $\ell < k_*$, let:
	$$y_{(\ell, n)} = \pi_{(n, Y^\star_n)} \circ f_{(\omega, n)}(y_\ell),$$
\end{enumerate}
where $\pi_{(n, Y^\star_n)}: G_n \rightarrow \langle Y^\star_n\rangle^*_{G_n}$ is the obvious projection onto $\langle Y^\star_n \rangle^*_{G_n}$ (as in $(*_1)$).
\begin{enumerate}[$(*_{13})$]
	\item Hence, we have:
	$$y_{(\ell, n)} = \sum \{a_{(y_\ell, n, \nu)} x_\nu : \nu \in Y^\star_n \}.$$
\end{enumerate}
	\begin{enumerate}[$(*_{14})$]
	\item For $n \geq m \geq n_*$, let:
	\begin{enumerate}[(a)]
	\item $Y^\star_{(n, m)} = \{\nu \in Y^\star_n : \nu \restriction m \in Y^\star_m \}$;
	\item $y_{(\ell, n, m)} =\sum \{a_{(y_\ell, n, \nu)} x_\nu : \nu \in Y^\star_{(n, m)}\}$.
	\end{enumerate}
\end{enumerate}
	\begin{enumerate}[$(*_{15})$]
	\item For $n \geq m \geq n_*$, $y_{(\ell, n, m)} = \pi_{(n, Y^\star_{(n, m)} )}(y_\ell)$.
	\end{enumerate}
	\begin{enumerate}[$(*_{16})$]
	\item $k_{\star} = k_*(= |X|)$ (recall $(*_{5})$ and $(*_{8})$).
\end{enumerate}
We prove $(*_{16})$. Toward contradiction, assume that $k_{\star} < k_*$. For $n \geq n_*$, the sequence $(y_{(\ell, n)} : \ell \leq k_\star)$ cannot be independent, so for some $\ell(n) \leq k_\star$ the element $y_{(\ell(n), n)}$ depends on $\langle  y_{(\ell, n)} : \ell \neq \ell(n), \; \ell \leq k_\star \rangle_{G_n}$. But if $n_2 \geq n_1 \geq n_*$, then using $f_{(n_2, n_1)}$ and $y_{(\ell, n_2)}$, $y_{(\ell, n_2, n_1)}$ we get that $\ell(n_2)$ can serve as $\ell(n_1)$, and so w.l.o.g. $\ell(n) = \ell(n_*)$ for all $n \geq n_*$, so renaming w.l.o.g. we have that $\ell(n) = k_\star$.
\begin{enumerate}[$(*_{16.1})$]
	\item Let $\sum \{b^n_\ell y_{(\ell, n)} : \ell \leq k_\star \} = 0$, where, for $\ell \leq k_\star$, $b^n_\ell \in \mathbb{Z}$ and $b^n_{k_\star} \neq 0$.
	\end{enumerate}
\begin{enumerate}[$(*_{16.2})$]
	\item W.l.o.g. for every $n_2 \geq n_1 \geq n_*$ we have that, for every $\ell \leq k_\star$, $b^{n_2}_{\ell} = b^{n_1}_{\ell}$, call this number $b_\ell$, so that in particular $b_{k_\star} \neq 0$.
\end{enumerate}
\begin{enumerate}[$(*_{16.3})$] 
\item However, in $G$ we have that $z := \sum \{b_\ell y_\ell : \ell \leq k_\star\} \neq 0$ and $z \notin \langle y_\ell : \ell < k_* \rangle^*_{G}$, so there is $n_+ > n_*$ such that $f_{(\omega, n_+)}(z) \neq 0$, so we can find $\nu \in Y_{n_+}$ such that $a_{(z, n_+, \nu)} \neq 0$.
\end{enumerate}
\begin{enumerate}[$(*_{16.4})$] 
\item $\nu \not \in Y^\star_{n_+}$.
\end{enumerate}
[Why? Because $\pi_{(n_+, Y^\star_{n_+})} \circ f_{(\omega, n_+)}(z) = \sum \{b^n_\ell y_{(\ell, n)} : \ell \leq k_\star \} = 0$ by $(*_{16.1})$.]
\begin{enumerate}[$(*_{16.5})$] 
\item There are $n_\dagger$ and $\rho$ such that:
	\begin{enumerate}[(a)]
	\item $n_\dagger \geq n_+ \geq n_*$;
	\item $\rho \in Y_{n_\dagger}$ and $\nu \triangleleft \rho$ (where $\nu$ is as in $(*_{16.3})$, $(*_{16.4})$);
	\item $\rho \in Y^\star_{n_\dagger}$;
	\item $a_{(z, n_\dagger, \rho)} \neq 0$.
	\end{enumerate}
\end{enumerate}
Why does $(*_{16.5})$ hold? We choose $\nu_n$ by induction on $n \geq n_+$ such that:
\begin{enumerate}[(a)]
	\item $\nu_{n_+} = \nu \in Y_{n_+}$;
	\item $\nu_n \in Y_n$;
	\item if $n = m+1 > n_+$, then $\nu_m \triangleleft \nu_n$;
	\item $a_{(z, n, \nu_n)} \neq 0$.
\end{enumerate}
As in the beginning of the proof we can carry the induction. Now, as we are assuming that $S$ has no $\omega$-branch, clearly for some $n_\dagger > n_+$ we have that $\nu_{n_\dagger} \notin S$, but then necessarily $L(\nu_{n_\dagger})$ has to be finite, i.e., $\nu_n \in Y^\star_{n_\dagger}$, since by assumption $S$ is a subtree of $T$ and we defined, for $\eta \in T$,
$L(\eta) = \{ p_\nu : (\nu \triangleleft \eta \text{ or } \eta \trianglelefteq \nu) \text{ and } \nu \in S^+\}$.
\begin{enumerate}[$(*_{16.6})$] 
	\item  $\pi_{(n_\dagger, Y^\star_{n_\dagger})} \circ f_{(\omega, n_\dagger)}(z) \neq 0$.
\end{enumerate}
[Why? By $(*_{16.5})$(d).]
\begin{enumerate}[$(*_{16.7})$] 
	\item $\pi_{(n_\dagger, Y^\star_{n_\dagger})} \circ f_{(\omega, n_\dagger)}(z) = 0$.
\end{enumerate}
[Why? Notice that:
$$\begin{array}{rcl}
 \pi_{(n_\dagger, Y^\star_{n_\dagger})} \circ f_{(\omega, n_\dagger)}(z)
 & = & \pi_{(n_\dagger, Y^\star_{n_\dagger})} \circ f_{(\omega, n_\dagger)}(\sum \{b_\ell y_\ell : \ell \leq k_\star\}) \\
 & = & \sum \{b^{n_\dagger}_\ell y_{(n_\dagger, \ell)} : \ell \leq k_\star\} \\
 & = & \sum \{b_{\ell} y_{(n_\dagger, \ell)} : \ell \leq k_\star\}\\
 & = & 0 \\
\end{array}$$
where the first equality is by the definition of $z$ in $(*_{16.3})$; the second equality is by $(*_{12})$, the third equality is by $(*_{16.2})$ and the fourth equality is by $(*_{16.1})$.]

\smallskip \noindent
But $(*_{16.7})$ contradicts $(*_{16.6})$. Hence, $(*_{16})$ holds indeed.
\begin{enumerate}[$(*_{17})$] 
\item
\begin{enumerate}
	\item Let $f^\star_{(\omega, n)} = \pi_{(n, Y^\star_{n})} \circ f_{(\omega, n)}$;
	\item $f^\star_{(\omega, n)}$ is a homomorphism from $G$ into $\langle Y^\star_{n} \rangle^*_{G_n} \leq G_n$.
\end{enumerate}\end{enumerate}
\begin{enumerate}[$(*_{18})$] 
\item If $n \geq n_*$, then $g_0 = f^\star_{(\omega, n)} \restriction \langle X \rangle^*_G$ is $1$-to-$1$ from $\langle X \rangle^*_G$ onto $\langle Y^\star_n \rangle^*_{G_n}$.
\end{enumerate}
[Why? By $(*_{16})$ and $(*_{17})$, as in the proof of $(*_{4})$.]
\begin{enumerate}[$(*_{19})$] 
\item The conditions from $(*_{1})$ holds and so the proof is complete.
\end{enumerate}
[Why? Let $n \geq n_*$ and let $Y$ from $(*_{1})$ be $Y^\star_n$, then by $(*_{18})$ we are done.]
\end{proof}
	
	\section{Proof of Main Theorems}

	In this section we show how to use our machinery to prove Theorems~\ref{second_th}, 
\ref{being_almost_free_coanalytic}.

	\subsection{Proof of Theorem~\ref{second_th}}
	
	\begin{definition}\label{the_group_G_P} Let $S$ be the following well-founded tree. For $n < \omega$, let $T_n$ be a copy of $\omega^{<n}$ such that for $ n < m < \omega$ we have that $T_n \cap T_m = \emptyset$. Let $S$ be the tree obtained adding a minimum $x_*$ to $\bigcup \{ T_n  : n < \omega\}$, so $x_*$ is the root of the tree $S$. Easily, $S$ can be realized as a subtree of $\omega^{< \omega}$ satisfying condition \ref{def_nice}(2), and $S$ is well-founded. Fix an enumeration of $S$ of order type $\omega$, say $(\eta_i : i < \omega)$. Let $\mathbb{P}_*$ be an infinite set of primes, enumerate $\mathbb{P}_*$ in increasing order as $(p^{\mathbb{P}_*}_i : i < \omega)$ and let $p_{\eta_i} = p^{\mathbb{P}_*}_i$. Then $S$ and $\bar{p} = (p_{\eta_i} : i < \omega)$ are as in Definition~\ref{def_nice}. W.l.o.g. we assume that if $\eta$ is not the root of the tree, then $\eta(0) = n$ if and only if $\eta \in T_n$.
\end{definition}

	\begin{observation} By \ref{the_hammer}, for $(\omega^{< \omega}, L_{\mathbb{P}_*})$ as in \ref{the_group_G_P}, $G(\omega^{< \omega}, L_{\mathbb{P}_*})$ is $\aleph_1$-free.
\end{observation}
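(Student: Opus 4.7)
My plan is to observe that this is a direct application of the Main Claim \ref{the_hammer} once one checks that the input pair in Definition \ref{the_group_G_P} satisfies the hypotheses of that claim. I would proceed as follows.

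First, I would verify that $(\omega^{<\omega}, L_{\mathbb{P}_*})$ is a nice pair in the sense of Definition \ref{def_nice}, with witnessing data $S$ (as fixed in \ref{the_group_G_P}) and $\bar{p}=(p_\eta:\eta\in S)$. Conditions (a)--(c) are immediate from the setup: $T=\omega^{<\omega}$ is a subtree of $\omega^{<\omega}$ with no leaves, $S\subseteq T$ by construction, elements of $S$ avoid $0$ in their range, and $(p_\eta)$ lists $\mathbb{P}_*$ without repetitions. Condition (d) is trivial since $T=\omega^{<\omega}$. For (e), I would need to remark that the extension $p_{\eta^\frown\nu}:=p_\eta$ to $S^+$ is well defined, which follows from the uniqueness of the decomposition $\sigma=\eta^\frown\nu$ with $\eta\in S$ and $\nu\in\{0\}^{<\omega}$: if $\eta\subseteq\eta'$ were two such decompositions with $\eta\neq\eta'$, the intermediate entries would all equal $0$, forcing $0\in\mathrm{ran}(\eta')$ and contradicting clause (b). Condition (f) is then taken as the definition of $L_{\mathbb{P}_*}$.

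Having verified nicety, I would invoke Observation \ref{crucial_obs} to conclude that $(\omega^{<\omega}, L_{\mathbb{P}_*})$ is of the form required by the engine \ref{the_engine}, so that the Polish abelian group $G(\omega^{<\omega},L_{\mathbb{P}_*})$ is well-defined.

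Next, I would note that the hypothesis on $S$ in \ref{the_group_G_P}, namely that $S$ is well-founded, is precisely the assumption that $S$ has no $\omega$-branch. Hence Main Claim \ref{the_hammer} applies and gives that $G(\omega^{<\omega},L_{\mathbb{P}_*})$ is separable. Finally, by the standard implication chain recalled just after the initial definitions in the introduction (separable $\Rightarrow$ torsionless $\Rightarrow$ $\aleph_1$-free), we conclude that $G(\omega^{<\omega},L_{\mathbb{P}_*})$ is $\aleph_1$-free.

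There is no real obstacle here: every step is a direct unpacking of a definition or the quotation of an earlier result. The only point that requires a brief verification, rather than pure quotation, is the well-definedness of $\bar{p}$ on $S^+$ in clause (e), which rests on the prohibition of $0$ in ranges of elements of $S$.
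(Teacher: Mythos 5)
Your proposal is correct and follows exactly the paper's intended route: the paper's entire justification is the citation of Main Claim on separability (for nice pairs whose $S$ has no $\omega$-branch) together with the implication chain separable $\Rightarrow$ torsionless $\Rightarrow$ $\aleph_1$-free, and your unpacking of the nice-pair conditions (including the well-definedness of $\bar{p}$ on $S^+$) is just the routine verification the paper leaves implicit.
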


	\begin{lemma}\label{when_iso} Let $\mathbb{P}_1$ and $\mathbb{P}_2$ be infinite sets of primes such that $\mathbb{P}_1 \cap \mathbb{P}_2$ is finite, and let $(\omega^{<\omega}, L_{\mathbb{P}_\ell})$ be as in \ref{the_group_G_P} and $G_{\ell} = G(\omega^{<\omega}, L_{\mathbb{P}_\ell})$ (recall \ref{the_engine}). Then there is no topological isomorphism from $G_{1}$ onto $G_{2}$.
\end{lemma}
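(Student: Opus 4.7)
The plan is to argue by contradiction: assume that $\gamma : G_1 \to G_2$ is a continuous surjection and derive an inconsistency.

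First, I apply Lemma~\ref{preserving_iso_progroups} to lift $\gamma$ to a morphism of Pro-Groups from the inverse system defining $G_1$ to the one defining $G_2$. Concretely this gives an increasing $\varphi : \omega \to \omega$ and onto group homomorphisms $f_n : G^{(1)}_{\varphi(n)} \to G^{(2)}_n$ satisfying the usual commutativity with the transition maps $\pi^{(1)}$ and $\pi^{(2)}$. (The onto-ness of each $f_n$ follows from the surjectivity of $\gamma$ together with the onto-ness of the bonding maps guaranteed by \ref{the_proviso}.)

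Next, I apply Fact~\ref{Fuchs_fact} to each $f_n$ on the generators $x_\eta^{(1)}$. Since $x_\eta^{(1)}$ is $p$-divisible in $G^{(1)}_{\varphi(n)}$ for every $p \in L_{\mathbb{P}_1}(\eta)$, the image $f_n(x_\eta^{(1)})$ must be $p$-divisible in $G^{(2)}_n$ for every such $p$. Writing $f_n(x_\eta^{(1)}) = \sum_{\mu \in \omega^n} c_\mu\, x_\mu^{(2)}$ with $c_\mu \in K^{(2)}_\mu$, this forces each $c_\mu$ to be $p$-divisible inside $K^{(2)}_\mu$ for every $p \in L_{\mathbb{P}_1}(\eta)$. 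The key observation, exploiting finiteness of $\mathbb{P}_1 \cap \mathbb{P}_2$, is that for $\eta \in S_1$ the set $L_{\mathbb{P}_1}(\eta)$ is infinite (by the infinite-branching hypothesis on $S_1$), so it contains infinitely many primes $p$ lying in $\mathbb{P}_1 \setminus \mathbb{P}_2$. For such a $p$ we have $p \notin L_{\mathbb{P}_2}(\mu)$ for every $\mu$, and the explicit description of $K^{(2)}_\mu$ as an additive group generated by $\{1\}\cup\{1/q : q \in L_{\mathbb{P}_2}(\mu)\}$ reduces $p$-divisibility of $c_\mu$ to ordinary integer divisibility of its $\mathbb{Z}$-numerator. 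Being divisible by infinitely many primes in $\mathbb{Z}$ forces the numerator, and hence $c_\mu$, to vanish. Thus $f_n(x_\eta^{(1)}) = 0$ for every $\eta \in S_1 \cap \omega^{\varphi(n)}$.

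The last and most delicate step is to convert this kernel information into an obstruction to surjectivity. Once the above is established, each $f_n$ factors through $\bigoplus_{\eta \notin S_1} K^{(1)}_\eta x_\eta$, and on each $\eta \notin S_1$ the set $L_{\mathbb{P}_1}(\eta)$ is finite (equal to the primes labelling the $S_1$-path up to the longest prefix of $\eta$ in $S_1$). Setting $g_\eta := f_n(x_\eta^{(1)})/\prod L_{\mathbb{P}_1}(\eta) \in G^{(2)}_n$, the image of $f_n$ becomes the $\mathbb{Z}$-span of the family $\{g_\eta : \eta \in \omega^{\varphi(n)} \setminus S_1\}$, and these families are tied across levels by $\pi^{(2)}_{(k,n)}(g_\eta) = (\prod L_{\mathbb{P}_1}(\eta \restriction \varphi(n))/\prod L_{\mathbb{P}_1}(\eta)) \cdot g_{\eta \restriction \varphi(n)}$. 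The plan is then to pick $\mu \in S_2$ with $|\mu|$ as large as desired and track the preimage of the inverse-limit element $y_\mu \in G_2$ defined by $y_\mu^{(n)} = x_{\mu \frown (0)^{n-|\mu|}}^{(2)}$ for $n \geq |\mu|$: the ``rich'' divisibility profile of $x_\mu^{(2)}$ at level $|\mu|$ (coming from the infinite $L_{\mathbb{P}_2}(\mu)$) cannot be produced as a $\mathbb{Z}$-combination of finitely many $g_{\eta_i}$'s whose divisibility structure is entirely controlled by the finite sets $L_{\mathbb{P}_1}(\eta_i) \subseteq \mathbb{P}_1$, contradicting $f_{|\mu|}$ onto.

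The main obstacle is precisely this last step: making the clash between the ``$\mathbb{P}_1$-nature'' of the $g_\eta$'s and the ``$\mathbb{P}_2$-nature'' of $x_\mu^{(2)}$ for $\mu \in S_2$ into a rigorous contradiction. I expect this to require a careful bookkeeping of the consistency relations across all levels $n$ simultaneously, using both the infinite branching of $S_2$ (which prevents any finite set of $\mathbb{P}_1$-compatible generators from witnessing the full $\mathbb{P}_2$-divisibility) and the pro-group commutativity (which propagates the level-$|\mu|$ obstruction to all $n > |\mu|$).
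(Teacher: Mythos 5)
Your first two steps are sound and they track the paper's own strategy: Lemma~\ref{preserving_iso_progroups} reduces the problem to surjections between finite levels, and your divisibility computation showing $f_n(x^{(1)}_\eta)=0$ for every $\eta\in S_1\cap\omega^{\varphi(n)}$ is exactly the content of the paper's appeal to Fact~\ref{Fuchs_fact}: the type of $x^{(1)}_\eta$ has infinitely many nonzero entries at primes of $\mathbb{P}_1\setminus\mathbb{P}_2$, and no nonzero element of $G^{(2)}_n$ has a type dominating it, so $x^{(1)}_\eta$ must be sent to $0$. The paper's proof essentially ends at this point and declares the contradiction with surjectivity immediate.

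The genuine gap is your third step, and you say so yourself; moreover, the single-level obstruction you sketch is not actually an obstruction. Once the $S_1$-part is killed, $f_n$ factors through $\bigoplus\{K^{(1)}_\eta x_\eta : \eta\in\omega^{\varphi(n)}\setminus S_1\}$, which is free abelian of countably infinite rank (for $\eta\notin S_1$ the set $L_{\mathbb{P}_1}(\eta)$ is finite, so $K^{(1)}_\eta\cong\mathbb{Z}$), and a free abelian group of infinite rank surjects onto \emph{every} countable abelian group, in particular onto $G^{(2)}_n$. The phrase ``divisibility structure entirely controlled by the finite sets $L_{\mathbb{P}_1}(\eta_i)$'' is where the sketch breaks: Fact~\ref{Fuchs_fact} bounds the type of $f_n(x^{(1)}_\eta)$ from \emph{below}, never from above, so the elements $g_\eta$ are essentially unconstrained elements of $G^{(2)}_n$ and may perfectly well include $x^{(2)}_\mu$ for $\mu\in S_2$; nothing prevents their $\mathbb{Z}$-span from being all of $G^{(2)}_n$. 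Consequently no contradiction can be extracted from any single level, and a completed argument must genuinely exploit the commutation relations $\pi^{(2)}_{(k,n)}\circ f_k=f_n\circ\pi^{(1)}_{(\varphi(k),\varphi(n))}$ across all levels simultaneously --- which your last paragraph gestures at but does not carry out. Until that bookkeeping is done the proof is incomplete. (For what it is worth, the same concern applies to the published one-paragraph proof, which also stops after the type computation at a single level; you have correctly located the delicate point, but you have not resolved it.)
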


	\begin{proof} Let $A = G_1 = \varprojlim(G_{(1, n)})$ and  $B = G_2 = \varprojlim(G_{(2, n)})$, where the $G_{(\ell, n)}$'s are from the inverse system $\mrm{inv}(\omega^{<\omega}, L_{\mathbb{P}_\ell})$, of course. Let $G_{(1, n)} = A_n$ and $G_{(2, n)} = B_n$. Also, for $n \leq m < \omega$, denote the bonding map from $A_n$ onto $A_m$ by $t_{(n, m)}$, and the bonding map from $B_n$ onto $B_m$ by $s_{(n, m)}$. Suppose that there is a topological isomorphism from $A$ onto $B$, then, there are $\phi, \psi$, $f_n$ and $g_n$, for $n < \omega$, such that:
	\begin{enumerate}[(a)]
	\item $\phi: \omega \rightarrow \omega$ is increasing;
	\item $\psi: \omega \rightarrow \omega$ is increasing;
	\item $f_n: A_{\phi(n)} \rightarrow B_n$;
	\item $g_n: B_{\psi(n)} \rightarrow A_n$;
	\item for every $n \leq k < \omega$, the following diagram is commutative:
	\begin{center}
		\begin{tikzcd}
			A_{\phi(k)} \arrow[r, "f_k"] \arrow[d, "t_{(\phi(k), \phi(n))}"'] & B_k \arrow[d, "s_{(k,n)}"] \\
			A_{\phi(n)} \arrow[r, "f_n"] & B_n  
		\end{tikzcd}
	\end{center}
	\item for every $n \leq k < \omega$, the following diagram is commutative:
	\begin{center}
		\begin{tikzcd}
			B_{\psi(k)} \arrow[r, "g_k"] \arrow[d, "s_{(\psi(k), \psi(n))}"'] & A_k \arrow[d, "t_{(k,n)}"] \\
			B_{\phi(n)} \arrow[r, "g_n"] & A_n  
		\end{tikzcd}
	\end{center}
	\item for every $n < \omega$, $f_n \circ g_{\phi(n)}: B_{\psi(\phi(n))} \rightarrow B_n$ is equal to $s_{(\psi(\phi(n)), n)}$;
	\item for every $n < \omega$, $g_n \circ f_{\psi(n)}: A_{\phi(\psi(n))} \rightarrow A_n$ is equal to $t_{(\phi(\psi(n)), n)}$.
	\end{enumerate}
\begin{enumerate}[$(*_1)$]
	\item If $\eta \in \omega^{\phi(k)+1}$ is not maximal in $S$, then $f_k(x_\eta) = 0_{B_k}$.
\end{enumerate}
[Why $(*_1)$? By our choice $\eta$ is not maximal in $S$, hence we have that:
$$A_{\phi(k)+1} \models \text{``$x_\eta$ is divisible by infinitely many primes in $\mathbb{P}_1$''},$$
but since $\mathbb{P}_1 \cap \mathbb{P}_2$ is finite, necessarily $f_k(x_\eta) = 0_{B_k}$, by \ref{Fuchs_fact}.]

\smallskip \noindent Let $n = \phi(\psi(0))$ and let $\eta \in \omega^{n+1}$ be such that $\eta$ is not maximal in the tree $S$. Then by $(*_1)$ we have that $f_{\psi(n)} (x_\eta) = 0_{B_{\psi(0)}}$ and so necessarily $g_0 \circ f_{\psi(0)} (x_\eta) = 0_{A_0}$, but this contradicts item (h) above, in fact, since $g_n \circ f_{\psi(n)}: A_{\phi(\psi(0))} \rightarrow A_0$ is equal to $t_{(\phi(\psi(0)), 0)}$ we also have that $g_0 \circ f_{\psi(0)}(x_\eta) = x_{\emptyset}$ and $x_{\emptyset} \neq 0_{A_0}$, a contradiction.
\end{proof}

	\begin{proof}[Proof of \ref{second_th}] Relying on \ref{when_iso}, it suffices to take a collection $\{\mathbb{P}_\alpha : \alpha < 2^{\aleph_0} \}$ of almost disjoint subsets of the set of prime numbers, where $A, B$ are almost disjoint if $A \cap B$ is finite. 
\end{proof}

\subsection{Proof of Theorem~\ref{being_almost_free_coanalytic}}
		
\begin{definition}\label{the_group_G_T} Let $A \subseteq \omega^{< \omega}$ be an infinite tree (in this case, crucially, possibly with leaves). For $\eta \in A$, let $\eta^{+1} = (\eta(n) + 1 : n < \omega)$ and let $A^{+1} = S = \{\eta^{+1} : \eta \in A\}$. Let $\bar{p} = \{p_\eta : \eta \in S\}$ and $T = \omega^{< \omega}$. And let $G(\omega^{< \omega}, L_A)$ be defined as in \ref{def_nice} for this choice of $S$ and $\bar{p}$.
\end{definition}


	\begin{lemma}\label{lemma_coanalytic} For $(\omega^{< \omega}, L_A)$ as in \ref{the_group_G_T}, letting $G_A = G(\omega^{< \omega}, L_A)$ we have:
	\begin{enumerate}[(1)]
	\item if $A$ is well-founded, then $G_A$ is separable, and so torsionless, and so $\aleph_1$-free, and so $\mathbb{Z}$-homogenenous;
	\item if $A$ is not well-founded, then $G_A$ is not $\mathbb{Z}$-homogenenous, and so not $\aleph_1$-free, and so not torsionless, and so not separable.
\end{enumerate}
\end{lemma}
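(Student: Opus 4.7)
The plan is to reduce Lemma~\ref{lemma_coanalytic} to the general results already proved in Subsection~\ref{sec_nice_pairs}. First, I would verify that the pair $(\omega^{<\omega}, L_A)$ from Definition~\ref{the_group_G_T}, equipped with the subtree $S := A^{+1}$ and the enumeration $\bar{p} = (p_\eta : \eta \in S)$ of primes, satisfies every clause of Definition~\ref{def_nice}. This is essentially bookkeeping: $T = \omega^{<\omega}$ has no leaves; the shift $\eta \mapsto \eta^{+1}$ forces $0 \notin \mathrm{ran}(\eta)$ for every $\eta \in S$ (so clause~\eqref{def_nice_item2} holds); the primes $p_\eta$ are chosen without repetition (possible since $S$ is countable and $\mathbb{P}$ is infinite); the closure $S^+$ lies trivially inside $\omega^{<\omega}$; and $L_A$ is by construction the function prescribed in clause~(f). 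Hence Observation~\ref{crucial_obs} applies and $G_A = G(\omega^{<\omega}, L_A)$ is a well-defined abelian non-Archimedean Polish group.

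Next, the key structural observation is that $\eta \mapsto \eta^{+1}$ is a tree isomorphism from $A$ onto $S$, and in particular restricts to a bijection $[A] \to [S]$ on infinite branches. Consequently $A$ is well-founded if and only if $S$ has no $\omega$-branch, which is precisely the dichotomy separating the two hypotheses of Main Claim~\ref{the_hammer} and Claim~\ref{inf_branch_claim}.

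For part~(1), if $A$ is well-founded then $S$ has no $\omega$-branch, so Main Claim~\ref{the_hammer} applies and yields that $G_A$ is separable; the well-known chain separable $\Rightarrow$ torsionless $\Rightarrow$ $\aleph_1$-free $\Rightarrow$ $\mathbb{Z}$-homogeneous (recorded in the introduction) then gives the full conclusion. For part~(2), if $A$ is not well-founded then $S$ admits an $\omega$-branch, and Claim~\ref{inf_branch_claim} directly yields that $G_A$ is not $\mathbb{Z}$-homogeneous; the contrapositive of the same chain then produces the failure of each of the other three properties.

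There is no genuine obstacle here: the heavy technical work was carried out in Main Claim~\ref{the_hammer} and Claim~\ref{inf_branch_claim}, and the present lemma merely packages that machinery for the specific parametric family $\{G_A : A \text{ a tree on } \omega\}$ that will be used in the reduction from the complete co-analytic set of well-founded trees in the proof of Theorem~\ref{being_almost_free_coanalytic}. The only care needed is to remember that the $+1$ shift is introduced precisely to ensure clause~\eqref{def_nice_item2} of Definition~\ref{def_nice}, while simultaneously preserving well-foundedness.
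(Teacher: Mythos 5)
Your proposal is correct and follows essentially the same route as the paper, which simply cites Claim~\ref{inf_branch_claim} and Main Claim~\ref{the_hammer}; your additional verification that $(\omega^{<\omega}, L_A)$ with $S = A^{+1}$ is a nice pair and that the $+1$ shift preserves well-foundedness is exactly the bookkeeping the paper leaves implicit.
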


	\begin{proof} This is by \ref{inf_branch_claim} and \ref{the_hammer}.
	\end{proof}




	\begin{proof}[Proof of \ref{being_almost_free_coanalytic}] First of all, recall that, 	as well-known, well-founded trees form a complete co-analytic set. This is by \ref{inv_is_Borel} and \ref{lemma_coanalytic}, together with the observation that the map $A \mapsto \mrm{inv}(\omega^{< \omega}, L_A)$ is Borel, when considering models whose domain is $\subseteq \omega$.
	\end{proof}
\section{Other results}

\subsection{Separating $\mathbb{Z}$-homogeneous and $\aleph_1$-free}

In this section we prove Theorem~\ref{th_ind_interest}. This result is of independent interest.

	\begin{fact}\label{the_groups_for_counterex} For every $2 \leq \mathbf{n} < \omega$ and $\mathbb{P}_*$ an infinite set of primes, there are $K_\mathbf{n}$, $\bar{x}_{\mathbf{n}} = (x_i : i < \mathbf{n})$ and $\bar{y}_{\mathbf{n}} = (y_p : p \in \mathbb{P}_*)$ such that:
	\begin{enumerate}[(1)]
	\item $Z_\mathbf{n} := \mathbb{Z}^\mathbf{n} \leq K_n \leq \mathbb{Q}^\mathbf{n} := H_\mathbf{n}$;
	\item for every $p \in \mathbb{P}_*$, there is $y_p \in K_\mathbf{n} \setminus Z_\mathbf{n}$ such that $p y_p \in H_\mathbf{n}$;
	\item for every $X \subseteq K_\mathbf{n}$ of size $< \mathbf{n}$ we have that $\langle X \rangle^*_{K_\mathbf{n}}$ is free;
	\item $\langle \bar{x} \rangle^*_{K_\mathbf{n}}$ is not free, in fact, more strongly, if $\mathbb{P}_0 \subseteq \mathbb{P}_*$, then the group
	$$K_{(\mathbf{n}, \mathbb{P}_0)} = \langle x_\ell, y_p : \ell < \mathbf{n} \text{ and } p \in \mathbb{P}_0 \rangle_{K_{\mathbf{n}}} \leq \langle \bar{x} \rangle^*_{K_\mathbf{n}}$$
is not free and disjoint from $\{y_p : p \in \mathbb{P}_* \setminus \mathbb{P}_0\}$.
	\end{enumerate}
\end{fact}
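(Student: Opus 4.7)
The plan is to realize $K_\mathbf{n}$ as
\[
K_\mathbf{n} \;=\; Z_\mathbf{n} \;+\; \sum_{p \in \mathbb{P}_*} \mathbb{Z}\cdot y_p, \qquad y_p := v_p/p,
\]
where each $v_p \in Z_\mathbf{n}$ is a $p$-primitive integer vector (meaning $v_p \notin p Z_\mathbf{n}$) chosen generically enough. The key condition I would impose on the sequence $(v_p)_{p \in \mathbb{P}_*}$ is the genericity property
\[
(*)\quad \text{for every nonzero } \phi \in \mathrm{Hom}(Z_\mathbf{n}, \mathbb{Z}),\ \{p \in \mathbb{P}_* : \phi(v_p) \equiv 0 \pmod p\} \text{ is finite}.
\]
To produce such a sequence, I would enumerate the countable set of nonzero primitive $\phi$ as $(\phi_k)_{k<\omega}$ and $\mathbb{P}_*$ as $(p_k)_{k<\omega}$, and pick $v_{p_k}$ whose reduction mod $p_k$ avoids $0$ and every kernel of $\phi_j \pmod{p_k}$ for $j \leq k$; this forbidden set has cardinality at most $(k+1)p_k^{\mathbf{n}-1}+1 < p_k^\mathbf{n}$ once $p_k > k+2$ (true for all large $k$, using $\mathbf{n}\ge 2$), with the finitely many exceptional indices handled arbitrarily. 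A scaling argument extends $(*)$ from primitive to general $\phi$, since $\phi = c\phi_j$ adds only the finite set $\{p : p \mid c\}$ to the bad primes.

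Properties (1) and (2) are then immediate from the construction (reading the intended ``$py_p \in Z_\mathbf{n}$'' in place of the printed ``$py_p \in H_\mathbf{n}$'', which is vacuous). For (3), given $X$ with $|X| < \mathbf{n}$, set $V = \mathbb{Q}\langle X\rangle \subseteq H_\mathbf{n}$, so that $\dim_\mathbb{Q} V < \mathbf{n}$ and $\langle X\rangle^*_{K_\mathbf{n}} = K_\mathbf{n} \cap V$. Since each $v_p$ is $p$-primitive, there is a direct sum decomposition $K_\mathbf{n}/Z_\mathbf{n} \cong \bigoplus_{p \in \mathbb{P}_*} (\mathbb{Z}/p)\bar{y}_p$, and the image of $K_\mathbf{n}\cap V$ in the quotient is supported on the set
\[
P_V \;=\; \{p \in \mathbb{P}_* : v_p \bmod p \,\in\, V \bmod{p Z_\mathbf{n}}\} \;\subseteq\; \{p : \phi(v_p) \equiv 0 \pmod p\}
\]
for any nonzero integer $\phi$ vanishing on $V$, and the right-hand side is finite by $(*)$. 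Hence $K_\mathbf{n}\cap V$ fits into a short exact sequence $0 \to Z_\mathbf{n}\cap V \to K_\mathbf{n}\cap V \to F \to 0$ with $F$ a finite group and $Z_\mathbf{n}\cap V$ a primitive (and hence free) sublattice of $Z_\mathbf{n}$; thus $K_\mathbf{n}\cap V$ is finitely generated and torsion-free, therefore free.

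For (4), $\langle \bar{x}\rangle^*_{K_\mathbf{n}} = K_\mathbf{n}$ because $K_\mathbf{n}/Z_\mathbf{n}$ is torsion, and for $\mathbb{P}_0 \subseteq \mathbb{P}_*$ infinite (the non-trivial case of the claim) the quotient $K_{(\mathbf{n}, \mathbb{P}_0)}/Z_\mathbf{n} \cong \bigoplus_{p \in \mathbb{P}_0}\mathbb{Z}/p$ is not finitely generated, so $K_{(\mathbf{n}, \mathbb{P}_0)}$ cannot be a free abelian group of rank $\mathbf{n}$; disjointness from $\{y_p : p \in \mathbb{P}_* \setminus \mathbb{P}_0\}$ follows from the same decomposition, since each $\bar{y}_p$ with $p \notin \mathbb{P}_0$ has order $p$ not occurring in $\bigoplus_{q \in \mathbb{P}_0}\mathbb{Z}/q$. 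The main technical point is establishing the genericity property $(*)$ and using it to bound $P_V$ for an arbitrary proper subspace $V$; once those are in hand, items (1)--(4) follow by a routine calculation in the quotient $K_\mathbf{n}/Z_\mathbf{n}$.
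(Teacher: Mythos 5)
Your construction is correct. Note that the paper states this as a \emph{Fact} and gives no proof of it at all --- it is the classical Pontryagin/Fuchs-style rank-$\mathbf{n}$ example (every subgroup of rank $<\mathbf{n}$ is free, the whole group is not), so there is no ``paper approach'' to compare against; your generic-choice realization is a standard and fully adequate way to produce it. The details check out: the counting argument for $(*)$ works since a primitive $\phi_j$ reduces to a nonzero functional mod every $p$, so each kernel has exactly $p_k^{\mathbf{n}-1}$ elements and the union of $k+1$ of them misses some nonzero residue once $p_k>k+2$; the reduction from arbitrary $\phi$ to primitive $\phi$ costs only the primes dividing the content; and the decomposition $K_\mathbf{n}/Z_\mathbf{n}\cong\bigoplus_p(\mathbb{Z}/p)\bar{y}_p$ (valid because the $\bar y_p$ have pairwise coprime orders) drives both (3) and (4). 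The one step worth writing out explicitly is why the image of $K_\mathbf{n}\cap V$ in $K_\mathbf{n}/Z_\mathbf{n}$ is finitely supported: for $w=z+\sum_q c_q y_q\in V$ and a primitive integral $\phi$ vanishing on $V$, clearing denominators and reducing mod $p$ gives $c_p\phi(v_p)\equiv 0\pmod p$, so any $p$ in the support of $\bar w$ satisfies $\phi(v_p)\equiv 0\pmod p$, and $(*)$ finishes it --- this is slightly cleaner than routing through your set $P_V$. You are also right that the printed ``$py_p\in H_\mathbf{n}$'' in (2) is vacuous and must be read as $py_p\in Z_\mathbf{n}$, and that (4) is only non-trivial (indeed only true) for $\mathbb{P}_0$ infinite, since for finite $\mathbb{P}_0$ the group $K_{(\mathbf{n},\mathbb{P}_0)}$ is finitely generated torsion-free, hence free.
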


	\begin{cclaim}\label{count_1} Let $G = \prod_{n < \omega} K_n$. Then $G$ is $\mathbb{Z}$-homogenenous but not $\aleph_1$-free.
\end{cclaim}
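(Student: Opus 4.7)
The two halves of the claim are essentially independent, and each one exploits a single clause of Fact~\ref{the_groups_for_counterex}.

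For non-$\aleph_1$-freeness, the plan is to use Fact~\ref{first_theorem_copy} and exhibit a finite $S \subseteq G$ with non-free pure closure. The idea is to push the non-freeness of $\langle \bar{x}_2 \rangle^*_{K_2}$ (guaranteed by clause~(4) of Fact~\ref{the_groups_for_counterex} with $\mathbf{n} = 2$) into $G$ along a single coordinate: letting $n_0$ be the index with $K_{n_0}$ corresponding to $\mathbf{n} = 2$, define $z_\ell \in G$ by $z_\ell(n_0) = x_\ell^{(2)}$ for $\ell \in \{0,1\}$ and $z_\ell(n) = 0$ for $n \neq n_0$. Since every $K_n$ is torsion-free, the equation $mh = az_0 + bz_1$ in $G$ forces $h(n) = 0$ for every $n \neq n_0$; conversely, any $h \in G$ supported on $n_0$ with $h(n_0) \in \langle x_0^{(2)}, x_1^{(2)} \rangle^*_{K_2}$ arises this way. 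Hence $\langle z_0, z_1 \rangle^*_G$ is isomorphic to $\langle \bar{x}_2 \rangle^*_{K_2}$, which is non-free by clause~(4), and Fact~\ref{first_theorem_copy} completes the first half.

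For $\mathbb{Z}$-homogeneity, I will show that every nonzero $g = (g_n)_n \in G$ has type $\mathbf{0}$. Fix any $n$ with $g_n \neq 0$; since the corresponding $\mathbf{n}$ is $\geq 2$, clause~(3) of Fact~\ref{the_groups_for_counterex} applied to the singleton $\{g_n\}$ yields that $\langle g_n \rangle^*_{K_n}$ is free of rank one, hence isomorphic to $\mathbb{Z}$. Writing $g_n = k w$ for a generator $w$ of this pure closure and $k \in \mathbb{Z} \setminus \{0\}$, purity forces $w$ to have $p$-height zero in $K_n$ for every prime $p$ (else $w/p \in \langle g_n \rangle^*_{K_n} = \mathbb{Z} w$ would yield $1 = pm$ for some $m \in \mathbb{Z}$). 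Thus the $p$-height of $g_n$ in $K_n$ equals the $p$-adic valuation of $k$, which is $0$ for all but finitely many primes $p$ and finite always. Since divisibility in the product $G$ happens coordinatewise, the $p$-height of $g$ in $G$ is bounded above by that of $g_n$ in $K_n$; hence the characteristic of $g$ is dominated by that of $k \in \mathbb{Z}$, which is trivially of type $\mathbf{0}$.

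The main subtlety lies in Part~1: for a generic finite subset of $G$, the pure closure in the product is typically strictly smaller than the coordinate-wise product of the pure closures in the $K_n$, because a single integer multiplier must work across all coordinates simultaneously. Localizing $z_0, z_1$ to a single coordinate sidesteps this and lets the non-freeness already present in $K_2$ transfer intact into $G$. Part~2 is by comparison routine, reducing to the rank-one case of clause~(3).
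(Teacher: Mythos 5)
Your proof is correct: the coordinate-supported copy of $\langle \bar{x}_2\rangle^*_{K_2}$ combined with Fact~\ref{first_theorem_copy} gives non-$\aleph_1$-freeness, and the rank-one pure closure argument from clause (3) correctly pins the characteristic of each coordinate (hence of $g$, since heights in a product are computed coordinatewise) to that of an integer. The paper dismisses this claim with the single word ``Easy,'' so there is no competing argument to compare against; your write-up simply supplies the routine details the authors omit, along the natural route.
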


	\begin{proof} Easy. 
\end{proof}
	
	Crucially the group $G$ from \ref{count_1} is not an inverse limit of completely decomposable groups, we want to strengthen \ref{count_1} so as to have in addition this.
	
	\begin{convention}
	\begin{enumerate}[(1)]
	\item $\mathbf{n} \in [2, \omega)$;
	\item $K_\mathbf{n}, H_\mathbf{n}, Z_\mathbf{n}$ are as in \ref{the_groups_for_counterex};
	\item $(p_n : n < \omega)$ lists $\mathbb{P}_\star$ in increasing order.
	\end{enumerate}
\end{convention}

	\begin{definition}\label{def_Gstar} For $n < \omega$, let $G^*_n = \langle x_\ell, y_{p_i} : \ell < \mathbf{n} \text{ and } i < n \rangle_{K_{\mathbf{n}}} \leq K_{\mathbf{n}} \leq \langle \bar{x} \rangle^*_{H_\mathbf{n}}$.
\end{definition}

	\begin{cclaim}\label{G*n_free}
	\begin{enumerate}[(1)]
	\item For every $n < \omega$, $G^*_n$ is free and $G^*_n \leq H_{\mathbf{n}}$;
	\item For every $n < \omega$, $G^*_n$ is completely decomposable;
	\item $(G^*_n : n < \omega)$ is $\leq$-decreasing with intersection $K_{\mathbf{n}}$.
	\end{enumerate}
\end{cclaim}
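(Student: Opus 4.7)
The plan is to address the three parts in sequence. Parts (1) and (2) reduce to an application of the structure theorem for finitely generated abelian groups, while part (3) is essentially a bookkeeping computation identifying the limit of the chain with $K_\mathbf{n}$.

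For part (1), I would first note that $G^*_n$ has the finite generating set $\{x_\ell : \ell < \mathbf{n}\} \cup \{y_{p_i} : i < n\}$ (of size $\mathbf{n}+n$), and from Definition~\ref{def_Gstar} it sits inside $H_\mathbf{n} = \mathbb{Q}^\mathbf{n}$, hence is torsion-free. By the structure theorem for finitely generated abelian groups, $G^*_n \cong \mathbb{Z}^r$ for some finite $r$; the embedding in $\mathbb{Q}^\mathbf{n}$ forces $r \leq \mathbf{n}$, while the presence in $G^*_n$ of the $\mathbb{Q}$-independent family $\bar{x}_\mathbf{n}$ forces $r \geq \mathbf{n}$. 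So $r = \mathbf{n}$ and $G^*_n$ is free of rank $\mathbf{n}$, giving (1); the stated inclusion $G^*_n \leq H_\mathbf{n}$ is immediate from the chain displayed in \ref{def_Gstar}.

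Part (2) is then immediate from (1): a free abelian group of finite rank $\mathbf{n}$ decomposes as a direct sum of $\mathbf{n}$ rank-one groups, each isomorphic to $\mathbb{Z}$, so $G^*_n$ is completely decomposable.

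For part (3), the monotonicity across $n$ is transparent from the definition, since the generating sets nest as $n$ varies, differing by the single element $y_{p_n}$ at each step. The less obvious ingredient is the identification of the limit with $K_\mathbf{n}$: I would invoke Fact~\ref{the_groups_for_counterex}(4) applied with $\mathbb{P}_0 = \mathbb{P}_*$, which yields $K_{(\mathbf{n}, \mathbb{P}_*)} = \langle x_\ell, y_p : \ell < \mathbf{n},\ p \in \mathbb{P}_*\rangle_{K_\mathbf{n}}$, and combine this with the fact that $K_\mathbf{n}$ is produced in \ref{the_groups_for_counterex} precisely as the subgroup of $H_\mathbf{n}$ generated over $Z_\mathbf{n}$ by the $y_p$'s for $p \in \mathbb{P}_*$. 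The main obstacle, such as it is, lies in setting up the monotonicity and limit operations in a way consistent with the subsequent inverse-limit construction — where these $G^*_n$'s will serve as the levels of an inverse system witnessing that $K_\mathbf{n}$ is realized as an inverse limit of completely decomposable groups — since the content of all three parts is otherwise light once the structure theorem is in hand.
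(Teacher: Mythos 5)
Your treatment of parts (1) and (2) is correct and is essentially the paper's argument in a different dress: the paper embeds $G^*_n$ into the explicit finitely generated free group $G^\star_n = \langle \frac{1}{p_0\cdots p_{n-1}}x_0,\dots,\frac{1}{p_0\cdots p_{n-1}}x_{\mathbf{n}-1}\rangle_{H_\mathbf{n}}$ (which contains each $y_{p_i}$ for $i<n$, reading Fact~\ref{the_groups_for_counterex}(2) as saying $p\,y_p\in Z_\mathbf{n}$) and quotes that subgroups of free abelian groups are free, whereas you apply the structure theorem directly to the finitely generated torsion-free group $G^*_n$. Both routes are standard and both give (2) immediately.

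Part (3) is where your proposal has a genuine gap. From Definition~\ref{def_Gstar} the generating sets nest \emph{upwards}: $G^*_0=Z_\mathbf{n}\leq G^*_1\leq G^*_2\leq\cdots$, each step adjoining $y_{p_n}$. So what is ``transparent from the definition'' is that the chain is $\leq$-\emph{increasing} with union $K_{(\mathbf{n},\mathbb{P}_*)}=\langle x_\ell, y_p:\ell<\mathbf{n},\ p\in\mathbb{P}_*\rangle_{K_\mathbf{n}}$ --- not, as the claim asserts and as your write-up repeats without comment, $\leq$-decreasing with intersection $K_\mathbf{n}$. Your appeal to Fact~\ref{the_groups_for_counterex}(4) with $\mathbb{P}_0=\mathbb{P}_*$ silently trades ``intersection'' for ``union'', and the extra statement you import --- that $K_\mathbf{n}$ equals the subgroup generated over $Z_\mathbf{n}$ by the $y_p$'s --- is not part of Fact~\ref{the_groups_for_counterex}, which only gives $K_{(\mathbf{n},\mathbb{P}_*)}\leq\langle\bar x\rangle^*_{K_\mathbf{n}}\leq K_\mathbf{n}$. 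This direction issue is not a quibble: a $\leq$-decreasing chain of free groups of rank $\leq\mathbf{n}$ necessarily has free intersection, while $K_\mathbf{n}$ is not free by Fact~\ref{the_groups_for_counterex}(4), so (3) as literally stated cannot coexist with (1); moreover the subsequent construction in \ref{def_inv_comple_dec_count}(d) leans precisely on the inclusions $G^*_{n+1}\leq G^*_n$ to build an inverse system. A correct write-up of (3) must confront and resolve this (by emending the definition of $G^*_n$ or the statement of (3)) rather than declare the monotonicity transparent. For what it is worth, the paper's own one-line proof addresses only (1) and (2) and is silent on (3).
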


\begin{proof} It suffices to consider the following group:
$$G^\star_n = \langle \frac{1}{\prod_{i < n} p_\ell} x_0, ..., \frac{1}{\prod_{i < n} p_\ell} x_{\mathbf{n}-1}  \rangle_{H_{\mathbf{n}}}$$
and to observe that $G^*_n \leq G^\star_n$ and that $G^\star_n$ is free.
\end{proof}

	\begin{cclaim}\label{def_inv_comple_dec_count} There is an inverse system $(G_n, f_{(n, m)} : m \leq n < \omega)$ such that:
	\begin{enumerate}[(a)]
	\item for every $n < \omega$, $G_n = \bigoplus \{G_{(n, j)} : j < \omega\}$;
	\item $G_{(n, 0)} = G^*_n$ and, for $j < \omega$,  $G_{(n, j+1)} = \mathbb{Z}x_{(n, j+1)}$;
	\item for each $n < \omega$, there is $g_n: \omega \rightarrow \omega$ onto such that for $j < \omega$:
	$$h_{(n, j)} := f_{(n+1, n} \restriction G_{(n+1, j)} \in \mrm{Hom}(G_{(n+1, j)}, G_{(n, g_n(j))});$$
	\item for each $n < \omega$, $h_{(n, 0)}$ is the inclusion map from $G^*_{n+1}$ into $G^*_{n+1}$, recalling that $G^*_{n+1} \leq G^*_{n}$;
	\item $g_n(2j+1) = 0$ and $(h_{(n, 2j+1)}(x_{(n+1, 2j+1)}) : j < \omega)$ lists the elements of $G_{(n, 0)}$;
	\item $g_n \restriction \{2j+2 : j < \omega \}$ is $1$-to-$1$ onto $\omega \setminus \{0\}$ and $h_{(n, 2j+2)}(x_{(n+1, 2j+2)}) = x_{g_n(2j+2)}$.
	\end{enumerate}
\end{cclaim}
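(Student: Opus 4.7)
The plan is direct and constructive: we build each $G_n$ explicitly, define the bonding maps summand-by-summand, and check the properties by inspection. Fix any bijection $g_n \restriction \{2j+2 : j<\omega\} \to \omega\setminus\{0\}$, for instance $g_n(2j+2) = j+1$, and extend by $g_n(0) = 0$ and $g_n(2j+1) = 0$ for all $j$; this $g_n$ is surjective as required. Set
\[
G_n := G^*_n \oplus \bigoplus_{k \geq 1} \mathbb{Z} x_{(n,k)},
\]
so that clauses (a) and (b) of the claim hold by definition.

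Next I would define $f_{(n+1,n)}$ by specifying its restriction $h_{(n,j)}$ to each summand $G_{(n+1,j)}$. On the $j=0$ summand, take $h_{(n,0)}$ to be the inclusion $G^*_{n+1} \hookrightarrow G^*_n$, which is well-defined and lands in $G_{(n,0)}$ by \ref{G*n_free}(3). Since $G^*_n$ is finitely generated free abelian, it is countable; fix any enumeration $(z_{(n,j)} : j < \omega)$ of $G^*_n$, and for each odd index define $h_{(n,2j+1)} : \mathbb{Z} x_{(n+1,2j+1)} \to G^*_n = G_{(n,0)}$ by sending the generator to $z_{(n,j)}$. This realises clause (e). Finally, for each positive even index $2j+2$, define $h_{(n,2j+2)} : \mathbb{Z} x_{(n+1,2j+2)} \to \mathbb{Z} x_{(n,g_n(2j+2))}$ by sending the generator to $x_{(n,g_n(2j+2))}$, giving clauses (d) and (f). Assembling, $f_{(n+1,n)} := \bigoplus_{j<\omega} h_{(n,j)}$ is a well-defined homomorphism $G_{n+1} \to G_n$ satisfying clause (c).

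It remains to check that $f_{(n+1,n)}$ is onto, so that the full inverse system $(G_n, f_{(n,m)} : m \leq n < \omega)$ obtained by composition $f_{(n,m)} := f_{(m+1,m)} \circ \cdots \circ f_{(n,n-1)}$ consists of onto bonding maps (as required by Proviso~\ref{the_proviso}). Every element of $G_{(n,0)} = G^*_n$ appears as some $h_{(n,2j+1)}(x_{(n+1,2j+1)})$ by the enumeration, so $G^*_n$ lies in the image; and every summand $\mathbb{Z} x_{(n,k)}$ with $k \geq 1$ is hit bijectively by $h_{(n,2j+2)}$, where $j$ is the unique index with $g_n(2j+2) = k$. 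Since the image contains every direct summand of $G_n$ and is a subgroup, it equals $G_n$.

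There is no real obstacle beyond bookkeeping: the only point to watch is that the choice of $h_{(n,0)}$ as the inclusion makes sense (which is \ref{G*n_free}(3)) and that $G^*_n$ is countable enough to be exhausted by the single sequence of odd-indexed summands (immediate since $G^*_n$ is finitely generated). All verifications reduce to applying the definitions summand-by-summand.
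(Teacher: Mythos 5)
Your construction is correct and is precisely the one the paper intends: the paper in fact states this claim without proof, treating the summand-by-summand definition of $f_{(n+1,n)}$ and the surjectivity check (needed for Proviso~\ref{the_proviso}) as routine bookkeeping. The only external input is $G^*_{n+1}\leq G^*_{n}$ from Claim~\ref{G*n_free}(3), which you correctly identify; the rest (extending maps off the free generators $x_{(n+1,j)}$, enumerating the countable group $G^*_n$ along the odd summands, and composing to get the full system) is exactly as you describe.
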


	\begin{cclaim} For $\mathfrak{s} = (G_n, f_{(n, m)} : m \leq n < \omega)$ and $G = \varprojlim(\mathfrak{s})$ as in \ref{def_inv_comple_dec_count} we have:
	\begin{enumerate}[(1)]
	\item each $G_n$ is completely decomposable (even free);
	\item $G$ is $\mathbb{Z}$-homogeneous;
	\item $G$ is not $\aleph_1$-free.
	\end{enumerate}
\end{cclaim}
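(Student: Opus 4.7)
The plan is to dispatch (1) and (2) directly from the structural description and to prove (3) by diagonally embedding $K_\mathbf{n}$ into $G$ and applying \ref{the_groups_for_counterex}(4). For (1), by construction $G_n = \bigoplus_{j < \omega} G_{(n,j)}$ is the direct sum of the free group $G^*_n = G_{(n,0)}$ (free by \ref{G*n_free}) and the cyclic groups $\mathbb{Z} x_{(n,j)}$ for $j \geq 1$, so $G_n$ is itself free and hence completely decomposable.

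For (2), the plan is to pass to a single coordinate to bound the characteristic. Pick $a \in G \setminus \{0\}$ and $n$ with $a_n \neq 0$; if $h_p(a) \geq k$ in $G$ then $a = p^k b$ for some $b \in G$, hence $a_n = p^k b_n$ in $G_n$, so $h_p(a) \leq h_p(a_n)$. In any free abelian group every nonzero element has finite $p$-height for each prime and is divisible by only finitely many primes, so the characteristic of $a$ is equivalent to $(0,0,\ldots)$, i.e., $\mathbf{t}(a) = \mathbf{0}$. (This in fact shows the stronger meta-fact that any inverse limit of free abelian groups is $\mathbb{Z}$-homogeneous.)

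For (3), the plan is to embed $K_\mathbf{n}$ diagonally into $G$. By \ref{G*n_free}(3) the sequence $(G^*_n)$ is $\leq$-decreasing with intersection $K_\mathbf{n}$, and by clause (d) of \ref{def_inv_comple_dec_count} the map $h_{(n,0)} = f_{(n+1,n)} \restriction G^*_{n+1}$ is the inclusion $G^*_{n+1} \hookrightarrow G^*_n$. Hence any $k \in K_\mathbf{n}$, viewed as an element of $G^*_n \subseteq G_n$ for every $n$, assembles to a coherent element of $\varprojlim G_n = G$, defining an injective homomorphism $\iota \colon K_\mathbf{n} \hookrightarrow G$. Since $\iota$ is an injection between torsion-free groups one has $\iota(\langle \bar{x} \rangle^*_{K_\mathbf{n}}) \subseteq \langle \iota(\bar{x}) \rangle^*_G$; the former is not free by \ref{the_groups_for_counterex}(4), so the latter cannot be free either (every subgroup of a free abelian group is free), and \ref{first_theorem_copy} then yields that $G$ is not $\aleph_1$-free. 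The only subtle point is verifying the compatibility of the diagonal map with the bonding maps, which is precisely what clause (d) of \ref{def_inv_comple_dec_count} is designed to provide; the remaining data in (e)--(f) plays no role here but is what allows each $G_n$ to be free in the first place.
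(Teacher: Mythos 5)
Your proposal is correct; parts (1) and (3) follow the paper's route (the paper also realizes $K_\mathbf{n}$ inside $G$ as the diagonal subgroup $G_{(\omega,0)} = \{(g_n)_n \in G : g_n \in G_{(n,0)} \text{ for all } n\}$, which clause (d) of \ref{def_inv_comple_dec_count} identifies with $\bigcap_n G^*_n = K_\mathbf{n}$, and then concludes non-$\aleph_1$-freeness from \ref{the_groups_for_counterex}(4)), but your proof of (2) is genuinely different and worth comparing. The paper argues by cases on $g = (g_n)_n$: if some $g_n \notin G_{(n,0)}$ it builds a homomorphism of $G$ onto a rank-one summand $G_{(n_*,j_*)} \cong \mathbb{Z}$ sending $g$ to a generator, and if $g \in G_{(\omega,0)}$ it uses that $G_{(\omega,0)}$ is pure in $G$ together with the $\mathbb{Z}$-homogeneity of $K_\mathbf{n}$ (which comes from \ref{the_groups_for_counterex}(3) with $\mathbf{n} \geq 2$). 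You instead observe that for any $a \neq 0$ there is $n$ with $f_{(\omega,n)}(a) = a_n \neq 0$, that heights can only increase under the projection (this is exactly Fact~\ref{Fuchs_fact}, so $\mathbf{t}_G(a) \leq \mathbf{t}_{G_n}(a_n)$), and that $\mathbf{t}_{G_n}(a_n) = \mathbf{0}$ because $G_n$ is free by part (1); since $\mathbf{0}$ is the minimal type, $\mathbf{t}_G(a) = \mathbf{0}$. This is shorter, avoids both the case split and the purity of $G_{(\omega,0)}$, and proves the more general fact that any subgroup of a product of free (indeed $\mathbb{Z}$-homogeneous) abelian groups is $\mathbb{Z}$-homogeneous — which makes clear that in Theorem~\ref{th_ind_interest} the $\mathbb{Z}$-homogeneity is automatic and only the failure of $\aleph_1$-freeness requires work. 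Your slight detour in (3) through $\langle \iota(\bar{x})\rangle^*_G$ and \ref{first_theorem_copy} is also fine, though one can finish more directly: $\iota(\langle \bar{x}\rangle^*_{K_\mathbf{n}})$ is already a countable non-free subgroup of $G$.
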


	\begin{proof} Item (1) is clear by  \ref{G*n_free} and \ref{def_inv_comple_dec_count}. Concerning (3), let $G_{(\omega, 0)}$ be the subgroup  of $G$ consisting of the elements $(g_n : n < \omega) \in G$ such that for every $n < \omega$, $g_n \in G_{(n, 0)}$. Then $G_{(\omega, 0)} \cong K_{\mathbf{n}}$ (recall \ref{G*n_free}(c)) and so obviously $G$ is not $\aleph_1$-free.
Finally, we show (2), i.e., that $G$ is $\mathbb{Z}$-homogenenous. Let $g = (g_n : n < \omega) \in G \setminus \{0\}$. We distinguish two cases.
\newline \underline{Case 1}. For some $n < \omega$, $g_n \not\in G_{(n, 0)}$.
\newline If this is the case, then for some $n_* < \omega$ we have that, for every $m \geq n_*$, $f_{(\omega, m)}(g) \neq 0$. But then, as in earlier proofs, we can find $h_* \in \mrm{Hom}(G, G_{(n_*, j_*)})$, for some $j_* > 0$, such that $h_*(g) = x_{(n_*, j_*)}$ and so easily $\langle g \rangle^*_G$ is free. 
\newline \underline{Case 2}. For every $n < \omega$, $g_n \in G_{(n, 0)}$.
\newline If this is the case, then $g \in G_{(\omega, 0)}$ and so as $G_{(\omega, 0)}$ is pure in $G$ it suffices to observe that $G_{(\omega, 0)} \cong K_{\mathbf{n}}$ is $\mathbb{Z}$-homogeneous.
\end{proof}

\subsection{For every $\eta \in [T]$, $\bigcap \{L(\eta \restriction n)\}$ is finite is not sufficient}\label{final_subsection}

	In a previous version of this paper we were claiming that if $(T, L)$ is as in \ref{the_engine} and in addition, for every $\eta \in [T]$, $\bigcap \{L(\eta \restriction n)\}$ is finite, then $G(T, L)$ is $\aleph_1$-free. The next example shows that this is {\em not} the case. This motivated the introduction of nice pairs in Section~\ref{sec_nice_pairs}. We include the example for completeness of exposition.

	\begin{cclaim}\label{claim_counterex} Let $(p_n : n \in \omega)$ be distinct primes. Then, for every $n < \omega$, there are $(\bar{p}_n, \bar{a}_n, \bar{Q}_n )$ such that the following conditions hold:
\begin{enumerate}[(a)]
\item $\bar{p}_n = \{p_\eta : \eta \in {2}^n\}$ where $p_\eta \in \{p_\ell : \ell \leq n\}$;
\item $\bar{a}_n = \{a_\eta : \eta \in {2}^n\}$, where $a_\eta \in \mathbb{Z} \setminus \{0\}$;
\item $\bar{Q}_n = \{Q_\eta : \eta \in {2}^n \}$, $Q_\eta \subseteq \{p_\nu : \nu \in 2^n\} \setminus \{p_\eta\}$;
\item $(\prod Q_\eta)$ divides $a_\eta$ in $\mathbb{Z}$ (when $\eta$ is the root of the tree, we let $\prod Q_{\eta} = \{1\}$);
\item $p_\eta$ does not divide $a_\eta$;
\item If $n = m+1$ and $\eta \in 2^m$ then:
	\begin{enumerate}[(i)]
	\item $p_{\eta^\frown (0)} = p_\eta$;
	\item $p_{\eta^\frown (1)} = p_{n+1}$
	\item $\bar{Q}_{\eta^\frown (0)}= \bar{Q}_\eta \cup \{p_{n+1}\}$;
	\item $\bar{Q}_{\eta^\frown (1)} = \bar{Q}_\eta \cup \{p_\eta\}$;
	\item $a_\eta = a_{\eta^\frown (0)} + a_{\eta^\frown (1)}$.
	\end{enumerate}
	\end{enumerate}
\end{cclaim}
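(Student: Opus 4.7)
The plan is to proceed by induction on $n$, using the reparametrization $a_\eta = c_\eta\,\prod Q_\eta$ for an integer $c_\eta$. Under this substitution, condition (d) is automatic, (b) and (e) reduce to $c_\eta \neq 0$ together with $p_\eta \nmid c_\eta$, and, writing $q$ for the fresh prime $p_{n+1}$ supplied by clauses (f)(ii)--(iii), the recursive equation (f)(v) translates into the B\'{e}zout-type identity
\[ c_{\eta^\frown(0)}\, q + c_{\eta^\frown(1)}\, p_\eta = c_\eta, \]
while the analogues of (e) at the next level become $p_\eta \nmid c_{\eta^\frown(0)}$ and $q \nmid c_{\eta^\frown(1)}$.

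The base case is forced: clauses (a)--(c) give $p_\emptyset = p_0$ and $Q_\emptyset = \emptyset$, and I take $c_\emptyset = 1$, so $a_\emptyset = 1$ and $p_0 \nmid 1$.

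For the inductive step, clauses (f)(i)--(iv) already pin down $p_{\eta^\frown(i)}$ and $Q_{\eta^\frown(i)}$, so the only freedom is the pair $(c_{\eta^\frown(0)}, c_{\eta^\frown(1)})$. Since $p_\eta$ and $q$ are distinct primes and hence coprime, B\'{e}zout's identity produces integers $u, v$ with $u\, q + v\, p_\eta = 1$; setting $c_{\eta^\frown(0)} := c_\eta u$ and $c_{\eta^\frown(1)} := c_\eta v$ solves the recursive equation. The non-divisibility requirements then come for free: reducing B\'{e}zout modulo $p_\eta$ gives $u\, q \equiv 1 \pmod{p_\eta}$, so $p_\eta \nmid u$, and together with the inductive hypothesis $p_\eta \nmid c_\eta$ this yields $p_\eta \nmid c_{\eta^\frown(0)}$; the symmetric computation modulo $q$ gives $q \nmid c_{\eta^\frown(1)}$. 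Non-vanishing is immediate, as $c_\eta \neq 0$ by induction and $u, v \neq 0$ (else one of $u\, q = 1$ or $v\, p_\eta = 1$ would hold, impossible since $p_\eta, q \geq 2$).

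The main potential obstacle, namely maintaining the non-divisibility conditions across the recursion, is thus handled automatically by B\'{e}zout; no auxiliary invariant, CRT argument, or case analysis on the specific primes is needed. The only remaining work is routine bookkeeping: checking that the $a_\eta := c_\eta \prod Q_\eta$ assembled from the $c_\eta$'s satisfy all of the clauses (a)--(f) simultaneously (in particular, that $p_\eta \notin Q_\eta$ keeps the divisibility and non-divisibility statements for $a_\eta$ equivalent to those on $c_\eta$).
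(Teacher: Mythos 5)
Your reduction to the integers $c_\eta = a_\eta/\prod Q_\eta$ is sound and is essentially the same normalization the paper uses (the paper's $a_{\eta^\frown(i)} = a_\eta\cdot p\cdot a_i'$ is your choice with the particular solution $c_{\eta^\frown(0)}=c_\eta u$, $c_{\eta^\frown(1)}=c_\eta v$). The base case, the translation of (d), (e), (f)(v) into $c_{\eta^\frown(0)}q + c_{\eta^\frown(1)}p_\eta = c_\eta$, and the verification that $p_\eta\nmid c_{\eta^\frown(0)}$ are all correct.

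There is, however, a genuine gap at the step ``the symmetric computation modulo $q$ gives $q\nmid c_{\eta^\frown(1)}$.'' You have $c_{\eta^\frown(1)}=c_\eta v$, and B\'ezout does give $q\nmid v$; but you also need $q\nmid c_\eta$, and this is \emph{not} part of your inductive hypothesis. The situation is not symmetric: $p_{\eta^\frown(0)}=p_\eta$ is the parent's prime, so the hypothesis $p_\eta\nmid c_\eta$ applies, whereas $q=p_{n+1}$ is a brand-new prime about which the induction so far says nothing. Since $c_\eta$ is a product of B\'ezout coefficients from earlier stages, which are essentially arbitrary integers, $q$ may well divide $c_\eta$. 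Moreover this is not a removable technicality of your particular choice: reducing the forced relation $qc_{\eta^\frown(0)}+p_\eta c_{\eta^\frown(1)}=c_\eta$ modulo $q$ gives $c_{\eta^\frown(1)}\equiv p_\eta^{-1}c_\eta \pmod q$ for \emph{every} admissible pair, so if $q\mid c_\eta$ then clause (e) at $\eta^\frown(1)$ fails no matter what you do. (The paper's own inductive step has the same latent issue, so you are in good company, but it must be addressed.)

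To repair the argument you need to strengthen the inductive hypothesis to include ``$c_\eta$ is not divisible by the fresh prime to be introduced at the next level,'' and to maintain it you should use the full one-parameter family of solutions $c_{\eta^\frown(0)}=c_\eta u + t p_\eta$, $c_{\eta^\frown(1)}=c_\eta v - t q$ rather than only $t=0$: the two congruence conditions modulo the next fresh prime $r$ each exclude at most one residue class of $t$ modulo $r$, so a good $t$ exists whenever $r\geq 3$; the case where $2$ occurs late in the enumeration $(p_n)$ needs a separate (parity) adjustment, e.g.\ by choosing $a_\emptyset$ appropriately. Your choice $t=0$ is particularly unfavorable here, since it forces $c_\eta\mid c_\nu$ for all $\nu\trianglerighteq\eta$, so any bad prime factor of some $c_\eta$ propagates to all of its descendants.
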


\begin{proof} 
\underline{Case 1}. $n = 0$
\newline Let $a_{()} = 1$, $Q_{()} = \emptyset$, $P_{()} = p_0$.
\newline \underline{Case 2}. $n = m+1$
Now $(p_\eta, Q_\eta : \eta \in 2^m)$ are determined by the other conditions, but we have to find for $\eta \in 2^m$, the numbers $a_{\eta^\frown (0)}, a_{\eta^\frown (1)} \in \mathbb{Z} \setminus \{0\}$ satisfying (d)-(f). To this end, choose $a'_0, a'_1 \in \mathbb{Z}$ such that:
\begin{itemize}
	\item $p_\eta \nmid a'_0$ and $p_{n+1} \nmid a'_1$;
	\item $p_\eta a'_0 + p_{n+1} a'_1 = 1$.
\end{itemize}
Let then:
\begin{itemize}
	\item $a_{\eta^\frown (0)} = a_\eta p_\eta a'_0$;
	\item $a_{\eta^\frown (1)} = a_\eta p_{n+1} a'_1$.
\end{itemize}
\end{proof}

	\begin{cclaim} Let $T = 2^{< \omega}$ and, for $\eta \in T$, let $L(\eta) = \{p_\nu : \eta \trianglelefteq \nu \in T\}$. Then $(T, L)$ is as in \ref{the_engine} and we have that there is $y \in G$ such that:
	\begin{enumerate}[(i)]
	\item $f_{(\omega, n)}(y) = \sum \{a_\eta x_\eta : \eta \in 2^n\}$;
	\item in $G$ we have that $\prod_{\ell < n} p_\ell$ divides $y$, for every $n < \omega$;
	\item $G$ is not $\mathbb{Z}$-homogenenous.
	\end{enumerate}
\end{cclaim}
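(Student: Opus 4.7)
The plan is to exploit the arithmetic identity $a_\eta = a_{\eta^\frown(0)} + a_{\eta^\frown(1)}$ and the divisibility bookkeeping in $\bar{Q}_n$ from \ref{claim_counterex}. First I would check that $(T,L)$ fits the template of \ref{the_engine}. Condition (a) is immediate from the definition $L(\eta) = \{p_\nu : \eta \trianglelefteq \nu\}$. For condition (b), note that clause (f)(i) of the preceding claim gives $p_{\eta^\frown(0)} = p_\eta$, so $p_\eta \in L(\eta^\frown(0))$ already, and therefore $L(\eta) = L(\eta^\frown(0)) \cup L(\eta^\frown(1))$.

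To obtain $y$ and prove (i), set $y_n := \sum_{\eta \in 2^n} a_\eta x_\eta$. Then $y_n \in G_n$ because $a_\eta \in \mathbb{Z}$, and the compatibility $f_{(n+1,n)}(y_{n+1}) = y_n$ follows by grouping each pair of siblings $\eta^\frown(0), \eta^\frown(1)$ (which both restrict to $\eta$) and invoking clause (f)(v), which says precisely $a_\eta = a_{\eta^\frown(0)} + a_{\eta^\frown(1)}$. Hence $y := (y_n)_{n<\omega} \in \varprojlim(\mrm{inv}(T,L)) = G$ with $f_{(\omega,n)}(y) = y_n$.

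For (ii) I would first observe, by a short induction on $n$ using clauses (f)(iii),(iv), that for $\eta \in 2^n$ one has $Q_\eta = \{p_0,\ldots,p_n\} \setminus \{p_\eta\}$. Now fix $\ell < \omega$; the claim is that $p_\ell$ divides $y_m$ in $G_m$ for every $m \geq \ell$. Split the summands of $y_m$ into two kinds: if $p_\eta = p_\ell$, then $p_\ell \in L(\eta)$ (since $\eta \trianglelefteq \eta$), so the engine makes $p_\ell \mid x_\eta$ inside $G_m$; if $p_\eta \neq p_\ell$, then $p_\ell \in Q_\eta$ by the observation, so $p_\ell \mid a_\eta$ in $\mathbb{Z}$. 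Either way $p_\ell \mid a_\eta x_\eta$, and summing gives $p_\ell \mid y_m$. To lift this to $p_\ell \mid y$ in $G$ itself I would pick, for each $m \geq \ell$, the unique $z_m \in G_m$ with $p_\ell z_m = y_m$ (uniqueness by torsion-freeness of $G_m$); this uniqueness forces the $z_m$'s to be compatible under the transition maps, so they assemble into $z \in G$ with $p_\ell z = y$. Since the primes $p_\ell$ are pairwise distinct and $G$ is torsion-free, the standard B\'ezout trick shows that divisibilities by $p_0,\ldots,p_{n-1}$ compound into divisibility by $\prod_{\ell<n} p_\ell$, giving (ii).

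Finally, (iii) is immediate: $y_0 = a_{()} x_{()} \neq 0$ since $a_{()} \neq 0$, so $y \neq 0$ in $G$; yet (ii) says every prime $p_\ell$ divides $y$, so the characteristic of $y$ satisfies $h_{p_\ell}(y) \geq 1$ for all $\ell$, in particular $\mathbf{t}(y) \neq \mathbf{0}$, witnessing that $G$ is not $\mathbb{Z}$-homogeneous. The only genuinely subtle point is the level-wise-to-inverse-limit lift inside (ii), which is handled cleanly by torsion-freeness; the remaining work is essentially a careful bookkeeping exercise in the combinatorics set up by \ref{claim_counterex}.
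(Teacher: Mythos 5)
Your proof of (i) is exactly the paper's argument: define $y_n = \sum_{\eta \in 2^n} a_\eta x_\eta$ and verify compatibility under $f_{(n+1,n)}$ by pairing siblings and invoking clause (f)(v) of the preceding claim. The paper's written proof stops there, leaving (ii) and (iii) to the reader; your additional verifications --- that $Q_\eta \cup \{p_\eta\}$ exhausts the primes introduced up to level $n$ (so each summand $a_\eta x_\eta$ is divisible by $p_\ell$ either through $a_\eta$, via clause (d), or through $x_\eta$, via $p_\ell \in L(\eta)$), the uniqueness-of-quotients lift of level-wise divisibility to the inverse limit, and the B\'ezout compounding of divisibility by distinct primes --- are all correct (modulo the paper's own off-by-one in the indexing of $p_{\eta^\frown(1)}$) and supply details the paper omits.
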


	\begin{proof} Let $y_n = \sum \{a_\eta x_\eta : \eta \in 2^n\}$. Now, clearly $y_n \in G_n$. Furthermore, if $n < m < \omega$, then $f_{(m, n)}(y_m) = y_n$. To see this it suffices to prove it for $n = m +1$. For this purpose, notice:
$$\begin{array}{rcl}
 f_{(n+1, n)}(y_n)& = & f_{(n+1, n)}(\sum \{a_\eta x_\eta : \eta \in 2^{n+1}\}) \\
 & = &  \sum \{a_\eta f_{(n+1, n)}(x_\eta) : \eta \in 2^{n+1}\}\\
 & = &  \sum \{a_\eta x_{\eta \restriction n} : \eta \in 2^{n+1}\}\\
 & = &  \sum \{ \sum \{a_\eta : \nu \triangleleft \eta \in 2^{n+1}\} : \nu \in 2^n\}\\
 & = & \sum \{(a_{\nu^\frown (1)} + a_{\nu^\frown (0)})x_\nu : \nu \in 2^n \},\\
\end{array}$$
but, by \ref{claim_counterex}(f)(v), $a_{\nu^\frown (1)} + a_{\nu^\frown (0)} = a_\nu$ and so we are done.
\end{proof}

\end{document}